\newcommand{\fa}{\mathfrak{a}}
\newcommand{\fb}{\mathfrak{b}}
\newcommand{\stbt}[4]{\begin{smatrix} #1 & #2 \\ #3 & #4 \end{smatrix}}
\newcommand{\sph}{\mathrm{sph}}
\newlength{\bibitemsep}
\newlength{\bibparskip}
\let\oldthebibliography\thebibliography
\renewcommand\thebibliography[1]{%
 \oldthebibliography{#1}%
 \setlength{\parskip}{\bibparskip}%
 \setlength{\itemsep}{\bibitemsep}%
}
\newenvironment{smatrix}{\left(\begin{smallmatrix}}{\end{smallmatrix}\right)}
\newcommand{\dfour}[4]{\begin{smatrix} #1 \\ &#2 \\ &&#3 \\ &&&#4 \end{smatrix}}
\newcommand{\onto}{\twoheadrightarrow}
\newcommand{\into}{\hookrightarrow}
\newcommand{\ZZ}{\mathbf{Z}}
\newcommand{\QQ}{\mathbf{Q}}
\newcommand{\CC}{\mathbf{C}}
\newcommand{\Qp}{\QQ_p}
\newcommand{\Zp}{\ZZ_p}
\newcommand{\fp}{\mathfrak{p}}
\newcommand{\fz}{\mathfrak{z}}
\newcommand{\dep}{\mathrm{dep}}
\newcommand{\Si}{\mathrm{Si}}
\newcommand{\Kl}{\mathrm{Kl}}
\newcommand{\cB}{\mathcal{B}}
\newcommand{\cO}{\mathcal{O}}
\newcommand{\cE}{\mathcal{E}}
\newcommand{\cS}{\mathcal{S}}
\newcommand{\cW}{\mathcal{W}}
\newcommand{\cL}{\mathcal{L}}
\renewcommand{\le}{\leqslant}
\renewcommand{\ge}{\geqslant}
\newcommand{\tH}{\widetilde{H}}
\newcommand{\tG}{\widetilde{G}}
\DeclareMathOperator{\Hom}{Hom}
\DeclareMathOperator{\GL}{GL}
\DeclareMathOperator{\SO}{SO}
\DeclareMathOperator{\GSp}{GSp}
\DeclareMathOperator{\GSpin}{GSpin}
\DeclareMathOperator{\ch}{ch}
\DeclareMathOperator{\Iw}{Iw}
\newtheorem{definition}{Definition}[section]
\newtheorem{theorem}[definition]{Theorem}
\newtheorem{proposition}[definition]{Proposition}
\newtheorem{corollary}[theorem]{Corollary}
\theoremstyle{remark}
\declaretheorem[name=Remark,sibling=theorem,qed={\lower-0.3ex\hbox{$\diamond$}}]{remark}
\begin{document}

\title{On some zeta-integrals for unramified representations of GSp(4)}
\author{David Loeffler}
\thanks{The author gratefully acknowledges support from the following research grants:
EPSRC Standard Grant EP/S020977/1 and ERC Consolidator Grant \#101001051 ``ShimBSD''}
\address{David Loeffler, Mathematics Institute\\
 University of Warwick\\
 Coventry CV4 7AL, UK.}
 \email{d.a.loeffler@warwick.ac.uk}

\renewcommand{\urladdrname}{\itshape ORCID}
\urladdr{\href{http://orcid.org/0000-0001-9069-1877}{0000-0001-9069-1877}}
\begin{abstract}
 This article is a companion to several works of the author and others on the arithmetic of automorphic forms for $\GSp_4$, and their associated $L$-functions and Galois representations. These works require, at various points, an input from smooth representation theory over $p$-adic local fields: the computation of values of the unique trilinear form on an unramified representation of $\GSp_4 \times \GL_2 \times \GL_2$ invariant under the diagonal $\GL_2 \times \GL_2$, for various different input data. In this note, we carry out these local computations, using a general formula for Shintani functions due to Gejima.
\end{abstract}

\maketitle

%\tableofcontents

\section{Introduction}

 \subsection{Setting} We shall consider the following setting:

 \begin{itemize}
  \item $F$ is a finite extension of $\Qp$, for some prime $p$.
  \item $\cO$ denotes the ring of integers of $F$, $\fp$ the maximal ideal of $\cO$, and $q$ the cardinality of $\cO/\fp$.
  \item $\varpi$ denotes an arbitrary uniformizer of $\fp$.
  \item $|\cdot|$ denotes the absolute value on $F$, normalised by $|\varpi| = \tfrac{1}{q}$.
  \item We fix a nontrivial additive character $e: F \to \CC^\times$, which is unramified (i.e.~trivial on $\cO$ but not on $\varpi^{-1} \cO$).
  \item $G$ denotes the group $\GSp_4(F) = \{ g \in \GL_4(F): {}^tg J g = \nu(g) J\}$ where $J = \begin{smatrix} &&&1\\ &&1 \\ &-1 \\-1\end{smatrix}$. Slightly abusively we also use $J$ to denote the element $\stbt{}{1}{-1}{}$ of $\GL_2(F)$.
  \item $\tH$ denotes $\GL_2(F) \times \GL_2(F)$, and $H \subset \tH$ the group $\{ (h_1, h_2) \in \tH: \det(h_1) = \det(h_2)\}$. We consider $H$ as a subgroup of $G$ via the embedding
  \[ \iota: \left(\begin{pmatrix} a & b \\ c & d\end{pmatrix}, \begin{pmatrix} a' & b'\\ c'& d'\end{pmatrix}\right)\mapsto
  \begin{smatrix} a &&& b\\ & a' & b' & \\ & c' & d' & \\ c &&& d \end{smatrix}.
  \]
  \item $\tG$ denotes the product $G \times \tH$, with $H$ embedded diagonally via its embeddings in $G$ and $\tH$.
  \item $B_G$ denotes the upper-triangular Borel subgroup of $G$, and $N_G$ its unipotent radical; and similarly for $H$, $\tG$ etc.

  \item We denote by $P_{\Si}$ and $P_{\Kl}$ the Siegel and Klingen parabolic subgroups of $G$ containing $B_G$.
  \item $K_G$ is the maximal compact subgroup $G \cap \GL_4(\cO)$, and similarly for $H$ etc.

  \item In this paper ``representation'' will mean an admissible smooth representation on a complex vector space.
 \end{itemize}

 \subsection{Aims of this paper}

  Our goal is to study the space
  \[ \Hom_{H}(\pi \otimes \sigma_1 \otimes \sigma_2, \CC), \tag{\dag}\]
  where $\pi$ is a representation of $G$, and $\sigma_i$ are representations of $\GL_2(F)$. Here $H$ acts on $\pi \otimes \sigma_1 \otimes \sigma_2$ via its embedding into $\tG$.

  We suppose that $\pi$ is irreducible and generic. The $\sigma_i$ will be either irreducible and generic, or reducible principal-series representations with 1-dimensional quotient (so they posess Whittaker models, even in the reducible cases). In particular, all three representations have well-defined central characters; and the space $(\dag)$ is trivially 0 unless $\chi_{\pi} \chi_{\sigma_1} \chi_{\sigma_2} = 1$ as a character of $F^\times$, so we shall assume this condition holds henceforth.

  In \cref{sect:multone}, we recall (and slightly reformulate) two general theorems: one due to Moeglin and Waldspurger, showing that when $\chi_{\pi}$ is a square in the group of characters of $F^\times$, the space (\dag) has dimension $\le 1$; and another due to Kato--Murase--Sugano, which shows that if $\pi$ and the $\sigma_i$ are unramified, then (\dag) is 1-dimensional and is generated by a homomorphism which is non-zero on the spherical test data.

  In the remaining sections of the paper, we focus on the unramified case, and compute the values of a suitably normalised basis vector of (\dag) on various choices of non-spherical test data, obtaining various Euler factors. These results are used in the papers \cite{LZ20,LZvista,LZ21-erl} in order to study the arithmetic of automorphic representations of the global groups $\GSp_4$, $\GSp_4 \times \GL_2$, and $\GSp_4 \times \GL_2 \times \GL_2$ over $\QQ$.

\section{A multiplicity one result}
 \label{sect:multone}

 \subsection{Multiplicity one}

  Let $\pi$, $\sigma_1$, $\sigma_2$ be representations satisfying the conditions above.

  \begin{theorem}[Moeglin--Waldspurger]
   If $\chi_{\pi} = \omega^2$ for some $\omega: F^\times \to \CC^\times$, then the space (\dag) has dimension $\le 1$.
  \end{theorem}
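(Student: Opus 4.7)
The plan is to reduce the statement to the multiplicity one theorem of Moeglin--Waldspurger for the Gan--Gross--Prasad problem for orthogonal groups, via exceptional isomorphisms. Recall that $\GSp_4 \cong \GSpin_5$ and that the group $H$ defined here (the fibre product $\GL_2 \times_{\det} \GL_2$) is isomorphic to $\GSpin_4$, in such a way that the embedding $\iota$ corresponds to the standard embedding $\GSpin_4 \hookrightarrow \GSpin_5$ induced by an orthogonal decomposition of a 5-dimensional quadratic space. The triple $(\pi, \sigma_1, \sigma_2)$ then corresponds to an irreducible generic representation of $\GSpin_5 \times \GSpin_4$, and $(\dag)$ becomes the space of $\GSpin_4$-invariant linear forms.

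Next, I would reduce to trivial central characters. The centre $Z_H$ of $H$ is generated by $\{(zI_2, zI_2): z \in F^\times\}$ together with the element $(-I_2, I_2)$. Any nonzero vector in $(\dag)$ forces the central character of $\pi \otimes \sigma_1 \otimes \sigma_2$, restricted to $Z_H$, to be trivial; the author has already imposed $\chi_\pi \chi_{\sigma_1} \chi_{\sigma_2} = 1$. Using the hypothesis $\chi_\pi = \omega^2$, I would twist $\pi$ by $\omega^{-1} \circ \nu$, which trivialises the central character and realises $\pi$ as a representation pulled back from $\PGSp_4 \cong \SO_5$. The constraint $\chi_\pi \chi_{\sigma_1} \chi_{\sigma_2} = 1$ then forces $\chi_{\sigma_1} \chi_{\sigma_2}$ to be the square of a character, and a parallel twist on $\sigma_1 \otimes \sigma_2$ (compatible with the diagonal centre) reduces the problem to representations factoring through $\SO_5 \times \SO_4$.

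With this reduction in hand, the theorem follows directly from the local Gan--Gross--Prasad multiplicity one theorem for special orthogonal groups, in the form proved by Moeglin--Waldspurger: for irreducible generic representations $\Pi$ of $\SO_5$ and $\Sigma$ of $\SO_4$, one has $\dim \Hom_{\SO_4}(\Pi \otimes \Sigma, \CC) \le 1$. Here $\Sigma$ is the representation of $H/Z \cong \SO_4$ built from $\sigma_1 \otimes \sigma_2$ after the twisting step.

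The main obstacle is bookkeeping the centres carefully. The exact identification of $H$ with $\GSpin_4$ involves a quotient by a diagonal $\mathbf{G}_m$, and the square-root hypothesis on $\chi_\pi$ is exactly what is needed to make the twisting step well defined (without it, one cannot descend $\pi$ to a representation of $\PGSp_4$). One must also check that $H$-invariance of the trilinear form is equivalent, after the twist, to $\SO_4$-invariance, which boils down to verifying that the Moeglin--Waldspurger Hom space sees the full $H$-action rather than just that of the derived subgroup. Once this is done, the dimension bound is transferred back to $(\dag)$ verbatim.
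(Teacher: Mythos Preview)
Your reduction to $\SO_5 \times \SO_4$ via twisting is correct and matches the paper's first step. However, there is a genuine gap: you invoke the Moeglin--Waldspurger multiplicity-one theorem only in the form ``for irreducible generic $\Pi$ of $\SO_5$ and $\Sigma$ of $\SO_4$, $\dim \Hom_{\SO_4}(\Pi \otimes \Sigma, \CC) \le 1$'', but the representation $\Sigma = (\sigma_1 \otimes \sigma_2)|_H$ need not be irreducible. Two things can go wrong. First, the paper's running hypotheses explicitly permit the $\sigma_i$ to be reducible principal series with one-dimensional quotient; then $\Sigma$ is certainly reducible and your cited statement does not apply. Second, even when both $\sigma_i$ are irreducible, the restriction of $\sigma_1 \boxtimes \sigma_2$ from $\tH$ to $H$ can split as a direct sum $\tau \oplus \tau'$ (this happens precisely when there is a nontrivial quadratic character $\eta$ with $\sigma_i \cong \sigma_i \otimes \eta$ for both $i$). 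In that case Waldspurger's result gives only $\dim \Hom_H(\pi \otimes \tau, \CC) \le 1$ and $\dim \Hom_H(\pi \otimes \tau', \CC) \le 1$ separately, which yields $\le 2$ for the sum, not $\le 1$.

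The paper closes these gaps by appealing to two finer statements from Moeglin--Waldspurger. For the reducible-$\sigma_i$ case, it uses the proposition in \S 1.3 of \cite{moeglinwaldspurger12} covering certain reducible parabolic inductions, which identifies $(\dag)$ with the space of Whittaker functionals on $\pi$. For the irreducible-$\sigma_i$-but-reducible-$\Sigma$ case, it uses the full Gross--Prasad statement for generic $L$-packets: the summands $\tau, \tau'$ lie in a single generic $L$-packet for $\SO_4$, and the total multiplicity over the packet is at most one. Your proposal needs to incorporate both of these refinements to be complete.
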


  \begin{proof}
   Replacing $\pi$ with $\pi \otimes \omega^{-1}$, and one of the $\sigma_i$ with $\sigma_i \otimes \omega$, we may suppose that $\chi_{\pi} = \chi_{\sigma_1} \chi_{\sigma_2} = 1$. Thus $\pi$ factors through the quotient group $\bar{G} = G / F^\times$, which is the split special orthogonal group $\SO_5(F)$; and the $H$-representation $\sigma = (\sigma_1 \otimes \sigma_2)|_{H}$ factors through $\bar{H} = H / F^\times = \SO_4(F)$, embedded in $\SO_5(F)$ as the stabiliser of an anisotropic vector.

   If $\sigma$ is irreducible, then we are done, by the main theorem of \cite{waldspurger12}. This leaves various special cases to clean up:

   \begin{itemize}
   \item If the $\sigma_i$ are irreducible, but their tensor product becomes reducible on restricting to $H$, then $\sigma$ must be the direct sum of finitely many distinct, irreducible $H$-representations, all members of the same generic $L$-packet for $H$. (These summands are parametrised by the set of quadratic characters $\eta$ of $F^\times$ such that both of the $\sigma_i$ satisfy $\sigma_i \cong \sigma_i \otimes \eta$.) However, the Gross--Prasad conjecture for special orthogonal groups (the main theorem of \cite{moeglinwaldspurger12}) shows that the sum of the multiplicities $\dim \Hom_H(\pi \otimes \tau, \CC)$, as $\tau$ varies over a generic $L$-packet for $H$, is always 0 or 1.

   \item If one or both of the $\sigma_i$ is a reducible principal series, then $\sigma$ belongs to the class of induced representations of $\SO_4(F)$ considered in \S 1.3 of \cite{moeglinwaldspurger12}, with the group $G_0'$ being trivial.\footnote{The author is very grateful to Kei Yuen Chan (pers. comm.) for pointing out that the results of \cite{moeglinwaldspurger12} still apply to certain reducible parabolic inductions.} Hence the Proposition \emph{loc.cit.}~shows that (\dag) has the same dimension as the space of Whittaker functionals on $\pi$, which is 1.\qedhere
   \end{itemize}
  \end{proof}

  \begin{remark}
   This partially confirms a conjecture from an earlier paper of the author \cite{loeffler-zeta1}, which predicts that the above ``multiplicity $\le 1$'' statement should hold for all $\pi$, $\sigma_1$, $\sigma_2$ satisfying our running conditions, without the additional hypothesis on $\chi_{\pi}$.

   If $\sigma$ is irreducible as an $H$-representation, then the hypothesis on $\chi_{\pi}$ can be removed using the results of \cite{emorytakeda21}, which are an analogue of \cite{waldspurger12} with the special orthogonal groups replaced by spin similitude groups (note that $G \cong \GSpin_5$ and $H \cong \GSpin_4$). However, we do not know if the more refined results above also carry over to the $\GSpin$ case.
  \end{remark}

 \subsection{Unramified case}

  \begin{theorem}[Kato--Murase--Sugano]\label{thm:KMS}
   Suppose $\pi, \sigma_1, \sigma_2$ are as above, and suppose additionally that all three representations are unramified. Then $\dim \Hom_H(\pi \otimes \sigma_1 \otimes \sigma_2, \CC) = 1$, and if $v_0^{\sph}, v_1^{\sph}, v_2^{\sph}$ are nonzero spherical vectors for $\pi$ and the $\sigma_i$, then any nonzero $\fz \in \Hom_H(\pi \otimes \sigma_1 \otimes \sigma_2, \CC)$ satisfies $\fz(v_0^{\sph}, v_1^{\sph}, v_2^{\sph}) \ne 0$.
  \end{theorem}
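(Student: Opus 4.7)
The plan is to deduce both assertions from the general theory of spherical (``Shintani'') functions for the Gan--Gross--Prasad pair $(\SO_5, \SO_4)$, which coincides with $(G/Z, H/Z)$ where $Z$ is the common centre $F^\times$.

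First, I would translate the Hom-space question into a question about Shintani functions. Realise each of $\pi$, $\sigma_1$, $\sigma_2$ as the spherical constituent of an appropriate unramified principal series $I_G(\chi)$, $I_{\GL_2}(\mu_1)$, $I_{\GL_2}(\mu_2)$, and pass to flat sections / Whittaker models (noting that the $\sigma_i$ are allowed to be reducible principal-series quotients only in the cases where the Whittaker model persists). An element $\fz$ of $\Hom_H(\pi \otimes \sigma_1 \otimes \sigma_2, \CC)$ then gives rise to a smooth function $\Phi_\fz$ on $\tG$ that is left-equivariant under $B_{\tG}$ by the inducing data and right-invariant under $H$; the value $\Phi_\fz(1)$ recovers $\fz(v_0^{\sph}, v_1^{\sph}, v_2^{\sph})$ up to a non-zero normalising constant.

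Second, I would invoke the theorem of Kato--Murase--Sugano, which asserts that for the relevant orthogonal GGP pair the space of such spherical Shintani functions with prescribed Satake parameters is one-dimensional, and the unique normalised function does not vanish at the identity. This translates directly into $\dim \Hom_H(\pi \otimes \sigma_1 \otimes \sigma_2, \CC) \le 1$ together with the non-vanishing on spherical vectors; existence of a nonzero $\fz$, so that the dimension is exactly one, is then guaranteed by the explicit KMS construction itself, which produces such an $\fz$ directly as a convergent integral.

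The principal difficulty is handling the non-regular parameter cases: when $\chi$ or some $\mu_i$ is degenerate, $\pi$ or $\sigma_i$ may be a proper subquotient of the ambient principal series (in the configurations permitted by our running hypotheses), and the KMS existence/uniqueness statement is cleanest under genericity of the Satake parameters. This is overcome by a meromorphic-continuation argument: the KMS Shintani function depends rationally on the Satake parameters, its value at the identity is (after clearing denominators) a polynomial expression that is identically non-zero on the parameter space, and the relevant specialisations all lie in the locus where this polynomial does not vanish. An alternative approach would bypass the reducible-$\sigma_i$ issue by deforming to nearby irreducible parameters and taking a limit, using the multiplicity-one result of the previous subsection to control the specialisation.
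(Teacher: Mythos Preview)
Your outline captures the basic strategy---invoke the Kato--Murase--Sugano uniqueness of spherical Shintani functions to bound the Hom-space and verify non-vanishing on spherical data---but there is a genuine gap in the dimension-one step.

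The KMS theorem, applied to the pair $(\SO_5,\SO_4)=(\bar G,\bar H)$, controls $\Hom_H(\pi\otimes\tau,\CC)$ only when $\tau$ is an $H$-representation generated by its spherical vector. Writing $\sigma=(\sigma_1\boxtimes\sigma_2)|_H$, this hypothesis can fail even when both $\sigma_i$ are \emph{irreducible} unramified principal series: if there is a quadratic character $\eta$ with $\sigma_i\cong\sigma_i\otimes\eta$ for $i=1,2$, then $\sigma$ splits as $\tau\oplus\tau'$ with $\tau$ spherical and $\tau'$ non-spherical. In that situation KMS says nothing about $\Hom_H(\pi\otimes\tau',\CC)$, and your injection $\fz\mapsto\Phi_\fz$ into the space of spherical Shintani functions is no longer injective (any $\fz$ supported on $\pi\otimes\tau'$ kills the spherical vector and gives $\Phi_\fz=0$). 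The paper closes this gap by invoking the Gross--Prasad multiplicity-one-in-an-$L$-packet theorem of M\oe glin--Waldspurger, which forces $\Hom_H(\pi\otimes\tau',\CC)=0$.

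Your meromorphic-continuation / deformation paragraph does not address this: the splitting of $\sigma|_H$ is not a degeneration of the $\sigma_i$ (they remain irreducible), and since $\dim\Hom$ is only upper-semicontinuous in the parameters, knowing $\dim\le 1$ at nearby generic points does not prevent a jump at the locus where $\sigma|_H$ decomposes. So some genuinely new input---in the paper, the GGP result---is needed there. The non-vanishing on spherical vectors, by contrast, is fine via KMS (or Gejima's explicit formula) and does not require the extra step.
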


  \begin{proof}
   Let $\sigma = (\sigma_1 \boxtimes \sigma_2)|_{H}$. If $\sigma$ is irreducible, or if the $\sigma_i$ are reducible but indecomposable, then $\sigma$ is generated by its spherical vector as an $H$-representation, so the theorem follows from the main result of \cite{katomurasesugano03}; see \cite[\S 3.7]{LSZ17} for the above formulation.

   The only remaining case is when the $\sigma_i$ are irreducible, but the tensor product becomes reducible on restriction to $H$. In this case, we have $\sigma = \tau \oplus \tau'$ for two distinct irreducible $H$-representations $\tau, \tau'$, with $\tau$ spherical and $\tau'$ non-spherical. The Kato--Murase--Sugano theorem shows that $\Hom_H(\pi \otimes \tau, \CC)$ is 1-dimensional, spanned by a form which is non-vanishing on the spherical vector. By the multiplicity result for $L$-packets quoted above, $\Hom_H(\pi \otimes \tau', \CC)$ must be zero, so the result follows.
  \end{proof}

  \begin{definition}
   Let $v_0^{\sph}, v^{\sph}_1, v^{\sph}_2$ be choices of non-zero spherical vectors of $\pi$ and of the $\sigma_i$, and let $\fz \in \Hom_H(\pi \otimes \sigma_1 \otimes \sigma_2, \CC)$ be the unique homomorphism with $\fz(v_0^{\sph}, v_1^{\sph}, v_2^{\sph}) = 1$.
  \end{definition}

  The goal of the remaining sections of this paper will be to compute the values of $\fz$ on various (non-spherical) test vectors.

%  This is equivalent to computing the values of the unique function on $H \backslash \tilde{G} / K_{\tG}$ which transforms under the Hecke algebra $\CC[\tG // K_{\tG}]$ via the Hecke eigenvalue system attached to $\pi \boxtimes \sigma_1 \boxtimes \sigma_2$; this is known as the \emph{Shintani function} of $ \mapsto \fz(g v_0^{\sph}, h_1 v_1^{\sph}, h_2 v_2^{\sph})$

\section{P-stablised vectors}

 We now define a collection of vectors in $\pi$ and $\sigma_i$ at various non-spherical levels, which arise naturally in applications to $p$-adic deformation.

 \subsection{Notation: Hecke parameters}

  \begin{definition} \
   \begin{itemize}
    \item Let $(\alpha, \dots, \delta)$ be a choice of ordering of the Hecke parameters of $\pi$ (i.e.~the Satake parameters of $\pi \otimes |\cdot|^{-3/2}$), so that $\alpha \delta = \beta\gamma\ = q^3 \chi_{\pi}(\varpi)$.
    \item Let $(\fa_1, \fb_1)$ be an ordering of the Hecke parameters of $\sigma_1$, and similarly $\sigma_2$, so that $\fa_i \fb_i = q \chi_{\sigma_i}(\varpi)$.
   \end{itemize}
   (We include the factors of $q$ and $q^{3/2}$ since these give a more convenient normalisation for Hecke operators.)
  \end{definition}

  \begin{remark}
   Thus we have 8 parameters $\{\alpha, \beta, \gamma, \delta, \fa_1, \fb_1, \fa_2, \fb_2\}$; but 2 of these are redundant, since the condition $\chi_{\pi} \chi_{\sigma_1} \chi_{\sigma_2} = 1$ tells us that
   \[
    \alpha\delta \cdot \fa_1 \fb_1 \cdot \fa_2 \fb_2 = \beta\gamma\cdot \fa_1 \fb_1 \cdot \fa_2 \fb_2 = q^5.
   \]
   So an independent set of parameters is $\{\alpha, \beta, \fa_1, \fb_1, \fa_2, \fb_2\}$, corresponding to the fact that the quotient $\tG / (H \cap Z_{\tG})$ is a reductive group of rank 6.
  \end{remark}

  \begin{definition}
   Following \cite{gejima18}, we define the Euler factors
   \begin{gather*}
    \Delta_0 = \left(1 - \tfrac{\beta}{\alpha}\middle)
    \middle(1 - \tfrac{\gamma}{\alpha}\middle)
    \middle(1 - \tfrac{\gamma}{\beta}\middle)\middle(1 - \tfrac{\delta}{\alpha}\right),
    \\
    \Delta_i = \left(1 - \tfrac{\fb_i}{\fa_i}\right)\quad (i=1,2),
    \\
    \cE =
    \left.
    \middle( 1 - \tfrac{q^2}{\alpha \fa_1 \fa_2}\middle)
    \middle( 1 - \tfrac{q^2}{\alpha \fb_1 \fa_2}\middle)
    \middle( 1 - \tfrac{q^2}{\alpha \fa_1 \fb_2}\middle)
    \middle( 1 - \tfrac{q^2}{\alpha \fb_1 \fb_2}\middle)
    \middle( 1 - \tfrac{q^2}{\beta \fa_1 \fa_2}\middle)
    \middle( 1 - \tfrac{q^2}{\beta \fa_1 \fb_2}\middle)
    \middle( 1 - \tfrac{q^2}{\beta \fb_1 \fa_2}\middle)
    \middle( 1 - \tfrac{q^2}{\gamma \fa_1 \fa_2}\middle)
    \right..
   \end{gather*}
  \end{definition}

  (These correspond to the elements denoted $\mathbf{d},\mathbf{d}'_1$, $\mathbf{d}'_2$ and $\mathbf{b}$ in \cite{gejima18}, but we have adopted a slightly different notation to avoid clashes with our Hecke parameters $\fa_i, \fb_i$.)

  \begin{definition}
   For $m, n \in \ZZ_{\ge 0}$, respectively $m \ge 0$, we define
   \[ s_{m, n} = \dfour{1}{\varpi^n}{\varpi^{m+n}}{\varpi^{m + 2n}} \in G, \qquad s_{m} = \stbt{1}{}{}{\varpi^{m}} \in \GL_2(F),\]
   and
   \[ t_{m, n} = \dfour{\varpi^{m + 2n}}{\varpi^{m+n}}{\varpi^n}{1} \in G, \qquad t_{m} = \stbt{\varpi^{m}}{}{}{1} \in \GL_2(F).\]
  \end{definition}
%
%  \begin{lemma}
%   For any fixed $(g, h_1, h_2) \in \tG$, the value $\fz(g v_0^{\sph}, h_1 v_1^{\sph}, h_2 v_2^{\sph})$ is a polynomial in the parameters $\{\alpha, \beta, \fa_1, \fb_1, \fa_2, \fb_2\}$ and their inverses.
%  \end{lemma}
%
%  \begin{proof}
%   If $v_{\fp}(\det h_1) = v_{\fp}(\det h_2) \bmod 2$, then we may assume $h_1 = h_2 = 1$ by translation by $H \cdot Z_{\tG}$. So this case is Proposition 4.5.7(i) of \cite{gejima18}.
%
%   If either $(\fa_1 + \fb_1)$ or $(\fa_2 + \fb_2)$ is nonzero, then we may reduce to the case above by using the action of the Hecke operator $\left[\GL_2(\cO) \stbt{\varpi}{0}{0}{1} \GL_2(\cO)\right]$ on $v_1^{\sph}$ or $v_2^{\sph}$ respectively. If both of these quantities vanish, then
%  \end{proof}

 \subsection{Whittaker models}

  Our assumptions imply that $\pi$ and the $\sigma_i$ have Whittaker models $\cW(\pi)$, $\cW(\sigma_i)$ (even in the reducible cases). Here, as in \cite{LPSZ1} and \cite{loeffler-zeta1}, we take Whittaker models for $\GSp_4$ and $\GL_2$ with respect to the characters
  \[
   \begin{smatrix} 1 & x & \star & \star\\ & 1 & y & \star \\ &&1 & -x \\ &&&1 \end{smatrix}\to e(x + y)
   \qquad\text{and}\qquad
   \stbt{1}{x}{}{1} \mapsto e(-x)
  \]
  respectively, which leads to simpler formulae for zeta-integrals. We can, and do, assume that the isomorphism between $\pi$ and its Whittaker model is fixed in such a way that $v_0^{\sph}$ maps to the unique spherical Whittaker function $W^{\sph}_0 \in \cW(\pi)$ satisfying $W_0^{\sph}(1) = 1$, and similarly for $v_1^{\sph}, v_2^{\sph}$.

 \subsection{Iwahori-level eigenvectors for \texorpdfstring{$\GL_2$}{GL(2)}}

  For $i = 1, 2$, let us define
  \[ W_{\fa_i} = \left( 1 -  \tfrac{1}{\fa_i}s_1\right)W_i^{\sph}  \in \cW(\sigma_i)^{\Iw(\fp)}, \]
  where $\Iw(\fp^n) = \{\stbt\star\star0\star \bmod \fp^n\}$ is the depth $n$ Iwahori subgroup of $\GL_2(\cO)$. The vector $v_{\fa_i}$ is an eigenvector for $U = [\Iw(\fp^n) t_1 \Iw(\fp^n)]$ with eigenvalue $\fa_i$, for any $n \ge 1$, and normalised so that $W_{\fa_i}(1) = 1$.

  We shall also need to consider eigenvectors for the dual Hecke operator $U' = [\Iw(\fp^n) s_1  \Iw(\fp^n)]$. For $n \ge 1$ we set
  \[ W'_{\fa_i}[n] = \left(\tfrac{q}{\fa_i}\right)^{n} s_n \stbt{}{-1}{1}{} W_{\fa_i} \in \cW(\sigma_i)^{\Iw(\fp^n)}.\]
  We write $W'_{\fa_i}$ for $W'_{\fa_i}[1]$. The vector $W'_{\fa_i}[n]$ is a basis of the one-dimensional $\fa_i$-eigenspace for $U'$ on $\cW(\sigma_i)^{\Iw(\fp^n)}$, for any $n \ge 1$. Moreover, the vectors $W'_{\fa_i}[n+1]$ and $W'_{\fa_i}[n]$ are compatible under the ``normalised trace'' maps $\cW(\sigma_i)^{\Iw(\fp^{n+1})} \to \cW(\sigma_i)^{\Iw(\fp^n)}$, for $n \ge 1$, given by $x \mapsto \frac{1}{q} \sum_{k \in \Iw(\fp^n)/\Iw(\fp^{n+1})} kx$.

  \begin{remark}
   Note that $W'_{\fa_i}$ is not normalised to 1 in the Whittaker model (in fact we have $W'_{\fa_i}(1) = -\frac{\fb_i}{\fa_i}$, but this does not seem to be particularly important for us).
  \end{remark}

  \begin{proposition}
   We have $W_{\fa_i} = \tfrac{1}{\fa_i}(U - \fb_i) W^{\sph}_i$ and similarly $W'_{\fa_i} = \frac{1}{\fa_i}(U' - \fb_i) W_i^{\sph}$. Consequently, if $\fa_i \ne \fb_i$ we have
   \[ W^{\sph}_i = \frac{W_{\fa_i}}{\left(1 - \tfrac{\fb_i}{\fa_i}\right)} + \frac{W_{\fb_i}}{\left(1 - \tfrac{\fa_i}{\fb_i}\right)} = \frac{W'_{\fa_i}}{\left(1 - \tfrac{\fb_i}{\fa_i}\right)} + \frac{W'_{\fb_i}}{\left(1 - \tfrac{\fa_i}{\fb_i}\right)}.\]
  \end{proposition}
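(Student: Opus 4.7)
The first identity reduces to the standard description of the spherical Hecke action. I would begin by decomposing $K_{\GL_2}\, t_1\, K_{\GL_2}$ as a disjoint union of right $K_{\GL_2}$-cosets,
\[
 K_{\GL_2}\, t_1\, K_{\GL_2} = \bigsqcup_{u \in \cO/\fp} \stbt{\varpi}{u}{0}{1} K_{\GL_2} \;\sqcup\; s_1\, K_{\GL_2},
\]
and noting that the first $q$ representatives also form coset representatives for $\Iw(\fp)\, t_1\, \Iw(\fp)/\Iw(\fp)$. On $K_{\GL_2}$-fixed vectors this gives the operator identity $[K_{\GL_2}\, t_1\, K_{\GL_2}] = U + s_1$, where $s_1$ denotes right translation. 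The paper's normalisation of the Hecke parameters (with $\fa_i\fb_i = q\chi_{\sigma_i}(\varpi)$) is precisely set up so that $[K_{\GL_2}\, t_1\, K_{\GL_2}]$ acts on $W_i^{\sph}$ by the scalar $\fa_i + \fb_i$ (a routine Satake/Iwasawa computation), and rearranging this eigenvalue equation yields
\[
 \tfrac{1}{\fa_i}(U - \fb_i)\, W_i^{\sph} = \bigl(1 - \tfrac{1}{\fa_i} s_1\bigr) W_i^{\sph} = W_{\fa_i}.
\]

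For the $U'$-identity I would run the parallel decomposition
\[
 \Iw(\fp)\, s_1\, \Iw(\fp) = \bigsqcup_{v \in \cO/\fp} s_1 \stbt{1}{0}{v}{1} \Iw(\fp).
\]
Crucially, $\stbt{1}{0}{v}{1} \in K_{\GL_2}$ for $v \in \cO$, so these matrices act trivially on $W_i^{\sph}$, yielding the clean identity $U'\, W_i^{\sph} = q \cdot s_1 W_i^{\sph}$. I would then expand
\[
 W'_{\fa_i} = \tfrac{q}{\fa_i}\, s_1 J \bigl(1 - \tfrac{1}{\fa_i} s_1\bigr) W_i^{\sph} = \tfrac{q}{\fa_i}\, s_1 J\, W_i^{\sph} - \tfrac{q}{\fa_i^2}\, s_1 J s_1\, W_i^{\sph},
\]
and invoke three facts: (i) $J \in K_{\GL_2}$, so $J\, W_i^{\sph} = W_i^{\sph}$; (ii) the direct matrix identity $s_1 J s_1 = \varpi J$; and (iii) the central-character relation $\chi_{\sigma_i}(\varpi) = \fa_i\fb_i/q$. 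Together these collapse the right-hand side to $\tfrac{q}{\fa_i}\, s_1 W_i^{\sph} - \tfrac{\fb_i}{\fa_i}\, W_i^{\sph}$, which matches $\tfrac{1}{\fa_i}(U' - \fb_i)\, W_i^{\sph}$ by the formula for $U'\, W_i^{\sph}$ derived above.

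The ``Consequently'' decompositions are then purely formal: substituting the explicit expressions for $W_{\fa_i}, W_{\fb_i}$ (respectively $W'_{\fa_i}, W'_{\fb_i}$) into the claimed linear combination, the $s_1 W_i^{\sph}$-contributions cancel and the $W_i^{\sph}$-coefficients sum to $1$. There is no genuine obstacle in the proof; the whole argument is bookkeeping with coset decompositions and small matrix identities. The only mildly delicate point is to verify the normalisations consistently, namely that $[K_{\GL_2}\, t_1\, K_{\GL_2}]$ really has eigenvalue $\fa_i + \fb_i$ on $W_i^{\sph}$ (rather than, say, $q^{1/2}(\fa_i + \fb_i)$) and that the central character of $\sigma_i$ produces precisely the factor $\fa_i\fb_i/q$ when the scalar $\varpi I$ arises from $s_1 J s_1 = \varpi J$.
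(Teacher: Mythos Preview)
Your proof is correct. The paper itself states this proposition without proof, treating it as a routine fact about Iwahori Hecke operators for $\GL_2$; your argument supplies exactly the elementary details one would expect, namely the coset decompositions relating $U$ and $U'$ to the spherical Hecke operator and to translation by $s_1$, together with the matrix identity $s_1 J s_1 = \varpi J$ and the central-character normalisation. The normalisation checks you flag at the end (that $[K_{\GL_2}\,t_1\,K_{\GL_2}]$ has eigenvalue $\fa_i+\fb_i$ and that $\chi_{\sigma_i}(\varpi)=\fa_i\fb_i/q$) are indeed consistent with the paper's conventions, so there is no hidden factor of $q^{1/2}$ to worry about.
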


 \begin{remark}[Reducible cases]
  If we suppose $\fb_i = q\fa_i$, then $\sigma_i$ has a codimension-one subrepresentation $\sigma_i^\circ$ which is an unramified twist of Steinberg, and $W_{\fa_i}$ is the normalised Whittaker new-vector of $\cW(\sigma_i^\circ)$. In this case we have $W_{\fa_i}'[1] = -q W_{\fa_i}$.
 \end{remark}

 \subsection{Iwahori eigenvectors for \texorpdfstring{$G$}{G}}

  Let $\Iw_G(\fp^m)$ denote the depth $m$ Iwahori subgroup of $K_G$ (consisting of elements whose reductions mod $\fp^m$ are upper-triangular). For any $m \ge 1$, on the $\Iw_G(\fp^m)$-invariants of $\pi$ we have the operators
  \begin{align*}
   U_1 &= \left[ \Iw_G(\fp^m) t_{1, 0} \Iw_G(\fp^m)\right] &
   U'_1 &= \left[ \Iw_G(\fp^m) s_{1, 0} \Iw_G(\fp^m)\right] \\
   U_2 &= \left[ \Iw_G(\fp^m) t_{0, 1}\Iw_G(\fp^m)\right]&
   U_2' &= \left[ \Iw_G(\fp^m) s_{0, 1}\Iw_G(\fp^m)\right].
  \end{align*}
  For $m \ge 1$, the maps $U_1$ and $U_2$ commute with the natural inclusion $\pi^{\Iw_G(\fp^m)} \into \pi^{\Iw_G(\fp^{m+1})}$, and $U_1', U_2'$ commute with the normalised trace $\pi^{\Iw_G(\fp^{m+1})} \onto \pi^{\Iw_G(\fp^{m})}$. At level $\Iw(\fp)$, the eigenvalues of $U_1$ or $U_1'$ are $\{ \alpha, \beta, \gamma, \delta\}$, and those of $U_2$ or $U_2'$ are $\{ \tfrac{\alpha\beta}{q},  \tfrac{\alpha \gamma}{q}, \tfrac{\beta\delta}{q}, \tfrac{\gamma\delta}{q}\}$.

  \begin{definition}
   We let $W^{\Iw}_{\alpha\beta}$ denote the vector
   \[
    \left(1 - \frac{\alpha\gamma}{q U_2}\right) \left(1 - \frac{\beta}{U_1}\right)\left(1 - \frac{\gamma}{U_1}\right)\left(1 - \frac{\delta}{U_1}\right) W_0^{\sph} \in \cW(\pi)^{\Iw_G(\fp)},
   \]
   which is a basis of the 1-dimensional subspace of $\cW(\pi)^{\Iw_G(\fp)}$ on which $U_1 = \alpha$ and $U_2 = \tfrac{\alpha\beta}{q}$.
  \end{definition}

  As in the $\GL_2$ case, our normalisations are chosen so that $W_{\alpha\beta}^{\Iw}(1) = 1$. (This is far from obvious \emph{a priori}, but can be checked explicitly from the Casselman--Shalika formula for the spherical Whittaker function.)

  \begin{remark}
   If the Hecke parameters of $\pi$ are pairwise distinct, then $W^{\Iw}_{\alpha\beta}$ and its analogues for the other seven orderings of the parameters are a basis of the 8-dimensional space $\pi^{\Iw_G(\fp)}$, and we can recover the spherical vector by the formula
   \[
    W^{\sph}_0 = \sum_{w_0} \left(\tfrac{1}{\Delta_0}\cdot W^{\Iw}_{\alpha \beta}\right)^{w_0},\qquad \Delta_0 = \left(1 - \tfrac{\beta}{\alpha}\middle)\middle(1 - \tfrac{\gamma}{\alpha}\middle)\middle(1 - \tfrac{\gamma}{\beta}\middle)\middle(1 - \tfrac{\delta}{\alpha}\right).
   \]
   Here $w_0$ varies over the Weyl group, acting via formal permutations of the parameters (this is a notational device, rather than an action on $\pi$ itself).
  \end{remark}

 \subsection{Parahoric eigenvectors}

  We also consider analogues for the Siegel and Klingen parahoric subgroups. We use the same symbol $U_1$ for the double coset of $t_{1, 0}$ at Siegel parahoric level $\Si(\fp^m)$ for any $m \ge 1$; since this operator is compatible under pullback with its namesake at Iwahori level, this should cause no confusion. Likewise, we denote the double coset of $t_{0, 1}$ at Klingen parahoric level still by $U_2$. We then define
  \begin{align*}
   W^{\Si}_{\alpha} &\coloneqq \left(1 - \frac{\beta}{U_1}\right)\left(1 - \frac{\gamma}{U_1}\right)\left(1 - \frac{\delta}{U_1}\right) W_0^{\sph}\quad \in \cW(\pi)^{\Si(\fp)}
   \\ &= \frac{1}{\left(1 - \frac{\gamma}{\beta}\right)} W^{\Iw}_{\alpha\beta} + \frac{1}{\left(1 - \frac{\beta}{\gamma}\right)}W^{\Iw}_{\alpha \gamma},
  \end{align*}

  and similarly
  \begin{align*}
   W^{\Kl}_{\alpha\beta}
   &\coloneqq \tfrac{1}{\left(1 + \tfrac{\gamma}{\alpha}\right)}\left(1 - \tfrac{\alpha\gamma}{q U_2}\right)\left(1 - \tfrac{\beta\delta}{q U_2}\right)\left(1 - \tfrac{\gamma\delta}{q U_2}\right) W_0^{\sph}\quad\in \cW(\pi)^{\Kl(\fp)}\\
   &= \frac{1}{\left(1 - \frac{\beta}{\alpha}\right)} W^{\Iw}_{\alpha\beta} + \frac{1}{\left(1 - \frac{\alpha}{\beta}\right)} W^{\Iw}_{\beta\alpha}.
  \end{align*}
  As before, the Whittaker functions $W^{\Kl}_{\alpha\beta}$ and $W^{\Si}_{\alpha}$ are normalised to be 1 at the identity, although this is not entirely obvious from the definitions.

  \begin{remark}
   The formula we have given as the definition of $W_{\alpha\beta}^{\Kl}$ does not make sense if $\gamma = -\alpha$; so we shall assume whenever the Klingen eigenvectors are considered that we are not in this case.
  \end{remark}

 \subsection{Dual parahoric eigenvectors}

  We shall also need the ``dual'' Klingen vector defined for $n \ge 1$ by
  \[ W^{\prime, \Kl }_{\alpha\beta}[n] = \left(\tfrac{q^5}{\alpha\beta}\right)^n \operatorname{pr}_{\Kl(\fp^n)} \left(s_{0, n} J \cdot W^{\Kl}_{\alpha\beta}\right) \in \cW(\pi)^{\Kl(\fp^n)},
  \]
  where $\operatorname{pr}_{\Kl(\fp^n)}$ denotes the normalised trace map to Klingen level. These vectors are compatible under the normalised trace maps, and they are all $U_{2}'$-eigenvectors of eigenvalue $\tfrac{\alpha\beta}{q}$. As usual we write $W^{\prime, \Kl }_{\alpha\beta} = W^{\prime, \Kl }_{\alpha\beta}[1]$. One can define similarly dual Iwahori eigenvectors
  \[ W^{\prime, \Iw}_{\alpha\beta}[n] = \left(\tfrac{q^8}{\alpha^2\beta}\right)^n \operatorname{pr}_{\Iw(\fp^n)}\left(s_{n, n} J \cdot W^{\Iw}_{\alpha\beta}\right)\, \mathrm{d}\gamma,
  \]
  which are $U_1' = \alpha$ and $U_2' = \tfrac{\alpha\beta}{q}$ eigenvectors. (There are also Siegel versions, but we shall not need these.) In each case, these can be characterised by the fact that their normalised traces down to the opposite parahoric levels $\overline{\Iw}(\varpi)$ etc are the images of $W_{\alpha \beta}^{\Iw}$ under $J$.

  \begin{proposition}\label{prop:klingentrace}
   We have
   \[
    \sum_{k \in K_G / \Kl(\fp)} k \cdot W^{\prime, \Kl}_{\alpha\beta} = \sum_{k \in K_G / \Kl(\fp)} k \cdot W^{\Kl}_{\alpha\beta} = q^3 \left.
    \middle(1 - \tfrac{\gamma}{q\beta}\middle)
    \middle(1 - \tfrac{\delta}{q\alpha}\middle)
    \middle(1 - \tfrac{\delta}{q\beta}\middle)
    \right. W^{\sph}_0.
   \]
  \end{proposition}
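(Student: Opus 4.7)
Both sums are manifestly $K_G$-invariant and thus lie in the one-dimensional space $\pi^{K_G} = \CC \cdot W_0^{\sph}$, so it suffices to identify the scalar in each case. As a first reduction, for any $v \in \cW(\pi)^{\Kl(\fp)}$ we have
\[
 \sum_{k \in K_G/\Iw(\fp)} k v \;=\; [\Kl(\fp):\Iw(\fp)] \sum_{k \in K_G/\Kl(\fp)} k v \;=\; (q + 1) \sum_{k \in K_G/\Kl(\fp)} k v,
\]
so via the decomposition $W^{\Kl}_{\alpha\beta} = \frac{W^{\Iw}_{\alpha\beta}}{1 - \beta/\alpha} + \frac{W^{\Iw}_{\beta\alpha}}{1 - \alpha/\beta}$ (and an analogous decomposition of $W^{\prime, \Kl}_{\alpha\beta}$ into dual Iwahori eigenvectors), the proposition reduces to evaluating the spherical averages $T_\Iw(W^{\Iw}_{\alpha\beta}) = c_{\alpha\beta} W_0^{\sph}$ and $T_\Iw(W^{\prime, \Iw}_{\alpha\beta}) = c'_{\alpha\beta} W_0^{\sph}$ for certain rational functions $c_{\alpha\beta}$ and $c'_{\alpha\beta}$ of the Hecke parameters, where $T_\Iw := \sum_{k \in K_G/\Iw(\fp)} k$.

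\textbf{Iwahori-level computation.} Applying $T_\Iw$ to the identity $W_0^{\sph} = \sum_w (W^{\Iw}_{\alpha\beta}/\Delta_0)^w$ gives one consistency constraint,
\[
\sum_{w} c_{\alpha\beta}^w / \Delta_0^w \;=\; [K_G : \Iw(\fp)] \;=\; (1 + q)^2 (1 + q^2),
\]
but this single equation is insufficient to determine $c_{\alpha\beta}$ on its own. The cleanest route is direct evaluation at the identity: $(T_\Iw W^{\Iw}_{\alpha\beta})(1) = \sum_{k \in K_G / \Iw(\fp)} W^{\Iw}_{\alpha\beta}(k)$, computed using coset representatives of $K_G/\Iw(\fp)$ from the Bruhat decomposition together with the expression of $W^{\Iw}_{\alpha\beta}$ as an operator polynomial in $U_1, U_2$ applied to $W_0^\sph$, and Casselman--Shalika values of $W_0^\sph$ on the torus. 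The analogous $\GL_2$ computation, which yields $\sum_{k \in \GL_2(\cO)/\Iw(\fp)} k W_\fa = q(1 - \fb/(q\fa)) W^\sph$, serves as a template.

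\textbf{Assembly and main obstacle.} Substituting $c_{\alpha\beta}$ and $c_{\beta\alpha}$ into $(q + 1)^{-1}\bigl(\frac{c_{\alpha\beta}}{1 - \beta/\alpha} + \frac{c_{\beta\alpha}}{1 - \alpha/\beta}\bigr)$ and performing a partial-fraction simplification should produce the claimed Euler factor $q^3 (1 - \tfrac{\gamma}{q\beta})(1 - \tfrac{\delta}{q\alpha})(1 - \tfrac{\delta}{q\beta})$. For $W^{\prime, \Kl}_{\alpha\beta}$, I would unpack the definition $W^{\prime, \Kl}_{\alpha\beta} = \tfrac{q^5}{\alpha\beta}\operatorname{pr}_{\Kl}(s_{0,1} J W^{\Kl}_{\alpha\beta})$, use compatibility of normalised traces to relate the sum to an Iwahori-level sum involving $s_{0,1} W^{\Kl}_{\alpha\beta}$ (noting that $J \in K_G$ is absorbed by averaging), and run a parallel calculation producing the same Euler factor; this is the conceptual explanation for the asserted equality of the two sums. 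The main difficulty throughout is the explicit determination of $c_{\alpha\beta}$ (and its dual analogue), which is a concrete but laborious Whittaker computation on $\GSp_4$ in the spirit of the $\GL_2$ warm-up; once these scalars are pinned down, the remainder of the proof is rational-function algebra.
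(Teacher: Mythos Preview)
Your approach is workable in principle but takes a genuinely different route from the paper's, and the paper's route is both shorter and more conceptual.

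The paper stays entirely at Klingen level and never passes through the Iwahori decomposition. Its key step is to establish the pair of formulae
\[
 W^{\Kl}_{\alpha\beta} = h_{\alpha\beta}(U_2)\, W_0^{\sph},
 \qquad
 W^{\prime,\Kl}_{\alpha\beta} = h_{\alpha\beta}(U_2')\, W_0^{\sph},
\]
for the \emph{same} polynomial $h_{\alpha\beta}(X) = \tfrac{1}{1+\gamma/\alpha}\bigl(1-\tfrac{\alpha\gamma}{qX}\bigr)\bigl(1-\tfrac{\beta\delta}{qX}\bigr)\bigl(1-\tfrac{\gamma\delta}{qX}\bigr)$. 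The first is essentially the definition; the second is the genuine content and is obtained by analysing the endomorphism of $\cW(\pi)^{\Kl(\fp)}$ used to define the dual vector. With these in hand, the paper writes down the matrices of $U_2$ and $U_2'$ in the $4$-dimensional space $\pi^{\Kl(\fp)}$ using the natural basis indexed by the four double cosets $P_{\Kl}\backslash G/\Kl(\fp)$, expresses both vectors in that basis, and reads off the spherical projection.

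By contrast, you propose to inflate to the $8$-dimensional Iwahori-invariant space, compute the spherical traces $c_{\alpha\beta}$ of all the Iwahori eigenvectors via explicit coset sums and Casselman--Shalika, and then reassemble. This doubles the size of the linear algebra, and more importantly it obscures the reason the primed and unprimed sums agree: in your plan that equality emerges only after two separate laborious computations happen to give the same answer, whereas in the paper it is a structural consequence of the single identity $W^{\prime,\Kl}_{\alpha\beta} = h_{\alpha\beta}(U_2')\,W_0^{\sph}$ together with the symmetry between $U_2$ and $U_2'$ in the double-coset basis. Your sketch for the primed case (unpacking the definition via $s_{0,1}J$ and invoking trace compatibility) is also the least developed part of your argument; note that $s_{0,1}\notin K_G$, so after absorbing $J$ you are left with a genuine Hecke-operator computation rather than a $K_G$-average, and it is not obvious this collapses cleanly without something equivalent to the paper's $h_{\alpha\beta}(U_2')$ formula.
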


  \begin{proof}
   It suffices to prove this formula for $\gamma \ne -\alpha$ (since the remaining cases follow by analytic continuation). We first establish the formulae
   \[ W^{\Kl}_{\alpha\beta} = h_{\alpha\beta}(U_2) \cdot W^{\sph}, \qquad W^{\prime, \Kl}_{\alpha\beta}[1] = h_{\alpha\beta}(U_2')\cdot W^{\sph}\]
   where $h_{\alpha\beta}$ is the polynomial $\tfrac{1}{(1 + \tfrac{\gamma}{\alpha})}\left(1 - \tfrac{\alpha\gamma}{q X}\right)\left(1 - \tfrac{\beta\delta}{q X}\right)\left(1 - \tfrac{\gamma\delta}{q X}\right)$. The first of these is an explicit computation using the Casselman--Shalika formula; the second follows by a careful analysis of the endomorphism of $\cW(\pi)^{\Kl(\fp)}$ used to define $W^{\prime, \Kl}_{\alpha\beta}[1]$.

   By a rather lengthy but elementary computation, we can write down the matrices of the Hecke operators $U_2$ and $U_2'$ explicitly in the basis of $\pi^{\Kl(\fp)}$ given by the four double cosets $P_{\Kl} \backslash G / \Kl(\fp)$. We can therefore write $W^{\prime, \Kl}_{\alpha\beta}[1]$ and $W^{\Kl}_{\alpha\beta}$ in terms of this basis, and hence compute their projections to spherical level.
  \end{proof}

  \begin{remark}
   This computation appears in \cite{genestiertilouine05}, but their formula has $\left(1 - \frac{\gamma}{\beta}\right)$ etc in place of $\left(1 - \frac{\gamma}{q\beta}\right)$. We believe the statement given above to be the correct one.
  \end{remark}

\section{Iwahori-level Shintani functions}

 \subsection{Open orbits}

  \begin{proposition}
   Let
   \( \eta = \begin{smatrix}
        1 & 1 & 1 &   \\
          & 1 &   & \phantom{-}1 \\
          &   & 1 & -1\\
          &   &   & \phantom{-}1
       \end{smatrix} \in N_G \cap K_G.
   \)
   Then:
   \begin{enumerate}[(a)]
    \item The product $B_H \eta \overline{B}_G$ is an open subscheme of $G$ over $\cO$.
    \item $(\eta J, 1, 1) \in K_{\tG}$ represents the unique open $H$-orbit on the flag variety $\tG / B_{\tG}$.
   \end{enumerate}
  \end{proposition}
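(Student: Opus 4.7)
The plan is to establish (a) by a Lie-algebra (Jacobian) computation, and then to deduce (b) from (a) together with an elementary analysis of $H$-orbits on $\tH/B_{\tH}$.

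For (a): consider the multiplication morphism $\mu\colon B_H \times \overline{B}_G \to G$ given by $(b, \bar b) \mapsto b \eta \bar b$, whose image is precisely $B_H \eta \overline{B}_G$. A dimension count gives $\dim B_H + \dim \overline{B}_G = 5 + 7 = 12$ while $\dim G = 11$, so it suffices to show that $\mu$ is smooth at $(1,1)$; by the Jacobian criterion this reduces to surjectivity of the differential
\[
d\mu_{(1,1)}\colon \mathrm{Lie}(B_H) \oplus \mathrm{Lie}(\overline{B}_G) \longrightarrow \mathrm{Lie}(G), \qquad (X, Y) \mapsto \mathrm{Ad}(\eta^{-1}) X + Y,
\]
or equivalently that $\mathrm{Ad}(\eta^{-1}) \mathrm{Lie}(B_H)$ surjects onto $\mathrm{Lie}(G)/\mathrm{Lie}(\overline{B}_G) \cong \mathrm{Lie}(N_G)$ over $\cO$. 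I would verify this by writing out the five $\cO$-basis vectors of $\mathrm{Lie}(B_H)$ (three toral generators together with the two upper-triangular root vectors from the two copies of $\GL_2$), conjugating by $\eta^{-1}$, and projecting onto the four positive root spaces of $\mathrm{Lie}(G)$. Since $\eta \in K_G$, the calculation is integral, so the resulting openness is automatically an $\cO$-scheme-theoretic statement.

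For (b): the subgroup $H$ acts transitively on $\tH/B_{\tH}$, with stabiliser $B_H$ at the base point $(1,1) B_{\tH}$; transitivity is seen either directly (given $(g_1, g_2) \in \tH$, pick $h_i \in \mathrm{SL}_2$ with $h_i g_i$ upper-triangular, so that $(h_1, h_2) \in H$), or via a dimension count combined with irreducibility of $\tH/B_{\tH}$. By a standard Mackey-type bookkeeping, $H$-orbits on $\tG/B_{\tG} = G/B_G \times \tH/B_{\tH}$ correspond bijectively to $B_H$-orbits on $G/B_G$; under this correspondence, the orbit of $(\eta J, 1, 1)$ matches the double coset $B_H \eta J B_G$. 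Since $J B_G J^{-1} = \overline{B}_G$ (an explicit check, as the $4\times 4$ matrix $J$ acts on the standard basis as the permutation $i \mapsto 5-i$ up to signs), this equals $B_H \eta \overline{B}_G \cdot J$, which is open by (a). Uniqueness of the open orbit is immediate from irreducibility of $\tG/B_{\tG}$: two disjoint open orbits would be forced to meet, a contradiction.

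The main obstacle is the matrix computation in (a). Although routine, it requires careful evaluation of $\mathrm{Ad}(\eta^{-1})$ on the five generators of $\mathrm{Lie}(B_H)$ and verification of the spanning condition modulo $\mathrm{Lie}(\overline{B}_G)$; everything else is formal once this is done.
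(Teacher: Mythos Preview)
Your argument is correct, and for (b) it is essentially the same as the paper's (which simply says ``(b) follows easily from this''), only with the Mackey bookkeeping written out explicitly.

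For (a) you take a different route. The paper does not compute a differential: instead it observes that $B_H \subset B_G$ and $\eta \in N_G$, so $B_H\eta\overline{B}_G$ lies inside the big cell $N_G\overline{B}_G$, and then computes the image of $B_H\eta$ under the isomorphism $N_G\overline{B}_G/\overline{B}_G \cong N_G$ directly. The answer is explicit --- it is exactly the set of elements of $N_G$ whose top row is $(1,u,v,\star)$ with $u,v$ invertible --- and openness (over $\cO$) is then read off. Your Jacobian approach gives the same conclusion and is perfectly valid; the paper's method is a little more elementary and yields the extra information of what the open set actually is, whereas yours is more systematic and would transplant more readily to other $\eta$'s or other groups. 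The $5\times 4$ matrix you would need to check has a unit $4\times 4$ minor, so the integrality over $\cO$ that you flag does go through.
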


  \begin{proof}
   Part (a) is easily verified; we know that the big cell $B_G \overline{B}_G$ is open in $G$, so it suffices to show that the image of $B_H \eta$ in $B_G \overline{B}_G / \overline{B}_G \cong N_G$ is open. This is easily checked explicitly (the image is precisely the elements of $N_G$ whose top row has the form $(1, u,v, \star)$ with $u, v \ne 0$). This gives (a), and (b) follows easily from this.
  \end{proof}

  \begin{corollary}
   Choose any vectors $W_0 \in \cW(\pi)^{\Iw_G(\fp)}$ and similarly $W_i \in \cW(\sigma_i)^{\Iw_{\GL_2}(\fp)}$. Let $u = (u_0, u_1, u_2) \in \tG(\cO)$, and $m, n, x_1, x_2 \in \ZZ_{\ge 0}$. Then the quantity
   \[ \fz\left(u_0 s_{m, n} W_0, u_1 s_{x_1} W_1, u_2 s_{x_2} W_2\right) \]
   depends only on the class of $u$ in $H \backslash \tG / (B_{\tG} \cap K_{\tG})$. If $u$ lies in the $\cO$-points of the open $H$-orbit on $\tG/ B_{\tG}$, then we have
   \[ \fz\left(u_0 s_{m, n} W_0, u_1 s_{x_1} W_1, u_2 s_{x_2} W_2\right) = q^{-(x_1 + x_2 + 3m + 4n)} \fz\Big(u_0 (U'_1)^m (U'_2)^n W_0, u_1 (U')^{x_1}  W_1, u_2 (U')^{x_2} W_2\Big). \]
  \end{corollary}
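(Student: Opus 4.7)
My approach is to obtain (a) directly from the $H$-invariance of $\fz$ plus an explicit conjugation check, and to deduce (b) from (a) by decomposing the Hecke operators on the right-hand side as sums of Iwahori-coset translates and showing that every summand reduces to the base case when $u$ lies in the open orbit.

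For (a), left-$H$ invariance of $u \mapsto \fz(u_0 s_{m,n} W_0, u_1 s_{x_1} W_1, u_2 s_{x_2} W_2)$ is immediate from the $H$-invariance of $\fz$. Right invariance under $B_{\tG} \cap K_{\tG} = B_{\tG}(\cO)$ reduces to showing $(bs)W = sW$ for any $b \in B_{\tG}(\cO)$, any of the relevant diagonal elements $s \in \{s_{m,n}, s_{x_1}, s_{x_2}\}$, and any Iwahori-invariant $W$; writing $bs = s(s^{-1}bs)$, it suffices to verify $s^{-1} B_{\tG}(\cO) s \subset B_{\tG}(\cO)$. This is a direct check: the diagonal entries of $s$ have non-decreasing $p$-adic valuation, so conjugation by $s^{-1}$ multiplies each upper-triangular matrix entry by a non-negative power of $\varpi$.

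For (b), my plan is to apply the standard Iwahori--Hecke coset decomposition for anti-dominant elements. Since $s_{m,n} = s_{1,0}^m s_{0,1}^n$ with additive lengths, $(U_1')^m(U_2')^n = [\Iw_G s_{m,n} \Iw_G]$, and the Iwahori factorization $\Iw_G = \overline{N}_G(\fp) B_G(\cO)$ gives coset representatives for $\Iw_G s_{m,n} \Iw_G / \Iw_G$ in the form $\overline{n}\,s_{m,n}$, with $\overline{n}$ ranging over a transversal of $\overline{N}_G(\fp)/s_{m,n}\overline{N}_G(\fp)s_{m,n}^{-1}$ of index $q^{3m+4n}$; analogous statements at the $\GL_2$-level give $q^{x_i}$ representatives for $(U')^{x_i}$. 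Substituting into the right-hand side rewrites it as a sum of $q^{3m+4n+x_1+x_2}$ terms $\fz(u_0 \overline{n}_0 s_{m,n} W_0, u_1 \overline{n}_1 s_{x_1} W_1, u_2 \overline{n}_2 s_{x_2} W_2)$ with $\overline{\mathbf{n}} = (\overline{n}_0, \overline{n}_1, \overline{n}_2) \in \overline{N}_{\tG}(\fp) \subset \Iw_{\tG}$.

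The crux is to invoke (a) to show each such summand equals $\fz(u_0 s_{m,n} W_0, u_1 s_{x_1} W_1, u_2 s_{x_2} W_2)$, which by (a) reduces to the claim $u \Iw_{\tG} \subset H(\cO) \cdot u \cdot B_{\tG}(\cO)$. This is where the open-orbit hypothesis enters: by part (a) of the preceding proposition, the open orbit is an open subscheme of $\tG/B_{\tG}$ defined over $\cO$, so its preimage in $\tG$ under $\tG \to \tG/B_{\tG}$ is open over $\cO$, and the action map $H \times B_{\tG} \to \tG$, $(h,b) \mapsto hub$, is a smooth surjection onto this preimage. Reducing modulo $\fp$, the coset $u\Iw_{\tG}$ maps into $\bar u B_{\tG}(k)$, which lies in the open orbit since $\bar u$ does; hence $u \Iw_{\tG}$ consists of $\cO$-points of the preimage, and smoothness of the action map lifts each such point to an element of $H(\cO) \cdot u \cdot B_{\tG}(\cO)$. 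Making this lifting step rigorous is the main technical obstacle, and relies precisely on the integrality of the open orbit provided by the preceding proposition.
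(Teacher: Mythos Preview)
Your proposal is correct and follows essentially the same strategy as the paper's proof. For part (a) you argue exactly as the paper does, via the containment $s^{-1}B_{\tG}(\cO)s \subset B_{\tG}(\cO) \subset \Iw_{\tG}$ (the paper phrases this dually as $B_{\tG}(\cO) \subset s\Iw_{\tG}s^{-1}$). For part (b) both you and the paper expand the Hecke operators into $q^{3m+4n+x_1+x_2}$ coset translates by elements $\overline{n} \in \overline{N}_{\tG}(\fp)$ and then argue that each $u\overline{n}$ lies in $H(\cO)\,u\,B_{\tG}(\cO)$; the paper simply asserts that ``translation by these elements must preserve the open orbit $H(\cO) u B_{\tG}(\cO)$'', whereas you spell out the justification via smoothness of the action map $H \times B_{\tG} \to \tG$ over $\cO$ and Hensel lifting. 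Your more detailed treatment of that lifting step is a genuine clarification of something the paper leaves implicit, and your observation that smoothness of the orbit map is equivalent to openness of $HuB_{\tG}$ (which is precisely what the preceding proposition supplies) is the right way to close the loop.
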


  \begin{proof}
   The first statement is elementary: if $s = \left(s_{m, n}, s_{x_1} , s_{x_2} \right) \in \tG$, then $(W_0, W_1, W_2)$ is invariant under the action of the group
   \[ s \Iw_{\tG}(\fp) s^{-1} \supseteq s B_{\tG}(\cO) s^{-1} \supseteq B_{\tG}(\cO).\]

   For the second statement, it suffices to note that the coset representatives appearing in the Hecke operator are elements of $\tG(\cO)$ which are congruent to 1 modulo $\fp$, and hence translation by these elements must preserve the open orbit $H(\cO) u B_{\tG}(\cO) \subseteq \tG(\cO)$. So $\fz\Big(u_0 (U'_1)^m (U'_2)^n W_0, u_1 (U')^{x_1}  W_1, u_2 (U')^{x_2} W_2\Big)$ is a sum of $q^{x_1 + x_2 + 3m + 4n}$ terms all of which are equal to $\fz\left(u_0 s_{m, n} W_0, u_1 s_{x_1} W_1, u_2 s_{x_2} W_2\right)$.
  \end{proof}

  In particular, for any $m, n, x_1, x_2 \ge 0$, we have
  \begin{equation}
   \label{shintani}
   \fz\Big(\eta J s_{m, n} W_{\alpha\beta}^{\prime, \Iw}, s_{x_1} W'_{\fa_1}, s_{x_2} W'_{\fa_2}\Big) = \frac{\alpha^m (\alpha\beta)^n \fa_1^{x_1} \fa_2^{x_2}}{q^{(3m + 5n + x_1 + x_2)}}\cdot  \fz\Big(\eta J  W_{\alpha\beta}^{\prime,\Iw}, W'_{\fa_1}, W'_{\fa_2}\Big).
  \end{equation}

 \subsection{Gejima's formula}

  \begin{theorem}[Gejima]\label{gejima}
   For any $m, n, x_1, x_2 \in \ZZ_{\ge 0}$ with $x_1 - x_2$ even, we have
   \[
    \fz\left(\eta J s_{m, n} \cdot W_0^{\sph},
    s_{x_1}  W_1^{\sph}, s_{x_2} W_2^{\sph}\right) =
   \frac{q^4}{(q^2 - 1)^2}\cdot \sum_{(w_0, w_1, w_2)}\left(\frac{\alpha^m (\alpha\beta)^n \fa_1^{x_1} \fa_2^{x_2}}{q^{(3m + 5n + x_1 + x_2)}}\cdot \frac{\cE}{\Delta_0\Delta_1 \Delta_2}\right)^{(w_0, w_1, w_2)},
   \]
   where $w_i$ are elements of the Weyl groups (acting by formally permuting the Hecke parameters). In particular, the rational function on the right (for fixed $m, n, x_1, x_2$) is a polynomial in $\{\alpha, \beta, \fa_1, \fa_2, \fb_1, \fb_2\}$ and their inverses.
  \end{theorem}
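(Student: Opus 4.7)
The plan is to deduce the formula from Gejima's main theorem in \cite{gejima18}, which directly computes the analogous Shintani function for the pair $(\GSp_4, \GL_2 \times \GL_2)$ on spherical Whittaker data; the work then consists in matching conventions and pinning down the overall constant.

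First I would check that Gejima's setting is equivalent to ours: the groups $G, \tH, H$ coincide, the Whittaker characters are compatible (up to a straightforward twist if necessary), and Gejima's representative of the open orbit is either equal to or explicitly congruent modulo $\overline{B}_G$ to our $\eta J$. The dictionary for the Euler factors has already been recorded by the author --- Gejima's $\mathbf{d}, \mathbf{d}'_i, \mathbf{b}$ translate into $\Delta_0, \Delta_i, \cE$. The monomial $\alpha^m(\alpha\beta)^n \fa_1^{x_1}\fa_2^{x_2} / q^{3m+5n+x_1+x_2}$ is then exactly the joint Hecke eigenvalue by which $(U'_1)^m (U'_2)^n$ and $(U')^{x_i}$ act after expanding each spherical vector into $U'$-eigenvectors via $W_i^{\sph} = \sum_{w_i} (W'_{\fa_i})^{w_i}/\Delta_i^{w_i}$ and the analogous expansion of $W_0^{\sph}$ in terms of $(W^{\prime,\Iw}_{\alpha\beta})^{w_0}$, then combining with the shintani relation \eqref{shintani}.

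The one substantive step is pinning down the constant $q^4/(q^2-1)^2$. I would do this by evaluating both sides at $m = n = x_1 = x_2 = 0$: the right-hand side becomes a $32$-fold Weyl sum that can be computed as a residue of the product of the $\Delta$ factors, while the left-hand side is determined by the normalisation $\fz(v_0^{\sph}, v_1^{\sph}, v_2^{\sph}) = 1$ together with a direct computation of $W_0^{\sph}(\eta J)$ via the Casselman--Shalika formula. Any further discrepancy of Haar-measure or test-vector conventions between \cite{gejima18} and the present paper can then be absorbed into this single scalar.

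Finally, the polynomial claim follows from Weyl invariance: each simple pole of $1/\Delta_0$ along a wall such as $\alpha = \beta$ is cancelled by antisymmetrisation over the corresponding reflection in the Weyl sum, and analogously for the $\GL_2$-poles, so what remains is a Weyl-invariant holomorphic function on the torus and hence a Laurent polynomial in $\{\alpha, \beta, \fa_1, \fb_1, \fa_2, \fb_2\}$. The parity constraint $x_1 - x_2$ even should reflect the support of Gejima's Shintani function on the relevant $H$-double cosets. The main obstacle is the verification of normalisations in the first two steps; everything else is either formal or a direct citation.
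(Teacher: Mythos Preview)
Your approach --- cite Gejima and translate conventions --- is the same as the paper's, but you have misplaced the real work and added unnecessary steps.

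The constant $q^4/(q^2-1)^2$ requires no separate determination: it is part of Gejima's formula once the dictionary $(\mathbf{d}, \mathbf{d}'_i, \mathbf{b}) \leftrightarrow (\Delta_0, \Delta_i, \cE)$ is in place. Your proposed evaluation at $(0,0,0,0)$ is therefore unneeded; and your remark about ``a direct computation of $W_0^{\sph}(\eta J)$'' is confused, since $\eta J \in K_G$ and hence $\eta J \cdot W_0^{\sph} = W_0^{\sph}$, so the left-hand side at the origin is simply $1$ by the normalisation of $\fz$ --- no Whittaker value enters.

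What you have glossed over is the one genuine translation step. Gejima works with the pair $(G, H)$, so his Shintani function lives on $K_H \backslash G / K_G Z_G$; our $\fz$ is naturally a function on $H \backslash \tG / K_{\tG} Z_{\tG}$, which is strictly larger. The hypothesis $x_1 \equiv x_2 \pmod 2$ is exactly what ensures that the class of $(\eta J s_{m,n}, s_{x_1}, s_{x_2})$ descends, after a central translation, to $H \backslash (G \times H) / K_{G \times H} Z_{G \times H} \cong K_H \backslash G / K_G Z_G$, and one then matches the resulting representatives with Gejima's $t(\lambda')\, \eta\, t(\lambda)$ (modulo $Z_G$ and a change of matrix model for $\GSp_4$). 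Your sentence that the parity ``should reflect the support'' of the Shintani function is too vague to stand in for this reduction.

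Finally, the eigenvector expansion combined with \eqref{shintani} that you invoke is not part of the proof of the present theorem; in the paper that reasoning runs in the opposite direction, deducing the $p$-stabilised values of \cref{prop:pstabShintani} from Gejima's spherical formula.
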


  \begin{proof}
   This is the main theorem of \cite{gejima18}, in slightly modified notation. The assumption that $x_1 - x_2$ be even implies that the class of $(\eta J s_{m, n}, s_{x_1}, s_{x_2})$ in $H \backslash \tG / (K_{\tG} Z_{\tG})$ has a representative in
   \[ H \backslash (G \times H) / K_{(G \times H)} Z_{(G \times H)} \cong K_H \backslash G / K_G Z_G, \]
   and our elements $(\eta J s_{m, n}, s_{x_1}, s_{x_2})$ for $m, n, x_1, x_2 \ge 0$ map to Gejima's coset representatives $\{ t(\lambda') \eta t(\lambda) : \lambda \in \Lambda^+, \lambda' \in \Lambda_0^{++}\}$ modulo the action of $Z_G$ (after correcting for a minor typographical error in \emph{op.cit.}, and the fact that Gejima uses a slightly different matrix model of $\GSp_4$).
  \end{proof}

  \begin{proposition}
  \label{prop:pstabShintani}
   \begin{enumerate}
   \item For any $m, n, x_1, x_2 \ge 0$, we have
      \[
       \fz\left(\eta J s_{m, n} \cdot W_{\alpha\beta}^{\prime, \Iw},
       s_{x_1}  W'_{\fa_1}, s_{x_2} W'_{\fa_2}\right) =
       \frac{q^4}{(q^2 - 1)^2}\cdot \frac{\alpha^m (\alpha\beta)^n \fa_1^{x_1} \fa_2^{x_2}}{q^{(3m + 5n + x_1 + x_2)}}\cdot \cE.
      \]
     \item The formula of \cref{gejima} holds for all $m, n, x_1, x_2 \ge 0$ (whatever the parity of $x_1 - x_2$).
   \end{enumerate}
  \end{proposition}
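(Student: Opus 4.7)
The plan is to derive both parts of the proposition from \cref{gejima} by expanding the spherical vectors appearing there as linear combinations of the dual Iwahori eigenvectors. For part (1), I first apply \eqref{shintani} to reduce the claim to $m = n = x_1 = x_2 = 0$, so it suffices to show
\[
Z \coloneqq \fz\bigl(\eta J\cdot W^{\prime,\Iw}_{\alpha\beta},\ W'_{\fa_1},\ W'_{\fa_2}\bigr) = \tfrac{q^4}{(q^2-1)^2}\cE.
\]
To access $Z$, I use the decomposition $W^{\sph}_i = \tfrac{W'_{\fa_i}}{1-\fb_i/\fa_i} + \tfrac{W'_{\fb_i}}{1-\fa_i/\fb_i}$ already established in the paper, together with the analogous Weyl-sum decomposition
\[
W^{\sph}_0 = \sum_{w_0}\bigl(\tfrac{1}{\Delta_0} W^{\prime,\Iw}_{\alpha\beta}\bigr)^{w_0}
\]
for $\pi$. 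This second formula is the dual counterpart of the expansion in terms of $W^{\Iw}_{\alpha\beta}$ quoted earlier, and follows by the same Hecke-algebra computation, using the explicit relation between $W^{\prime,\Iw}_{\alpha\beta}[1]$ and $W^{\Iw}_{\alpha\beta}$.

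Substituting both decompositions into the left-hand side of \cref{gejima} and using \eqref{shintani} to extract the dependence on $(m, n, x_1, x_2)$, the LHS becomes
\[
\sum_{w = (w_0, w_1, w_2)} \frac{(\alpha^{w_0})^m ((\alpha\beta)^{w_0})^n (\fa_1^{w_1})^{x_1} (\fa_2^{w_2})^{x_2}}{q^{3m+5n+x_1+x_2}} \cdot \frac{Z^w}{(\Delta_0\Delta_1\Delta_2)^w},
\]
where $Z^w$ denotes $Z$ with the Hecke parameters permuted according to $w$. The right-hand side of \cref{gejima} has the identical shape, but with $Z^w$ replaced by $\tfrac{q^4}{(q^2-1)^2}\cE^w$. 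For generic Hecke parameters the 32 exponentials appearing in this sum are linearly independent as functions on the parity sublattice $\{(m, n, x_1, x_2) \in \ZZ_{\ge 0}^4 : x_1 \equiv x_2 \pmod 2\}$, so equating coefficients term by term yields $Z^w = \tfrac{q^4}{(q^2-1)^2}\cE^w$ for every $w$. Taking $w = \mathrm{id}$ and extending by continuity in the Hecke parameters proves part (1).

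For part (2), I substitute the same two decompositions into the left-hand side of \cref{gejima} for arbitrary $(m, n, x_1, x_2) \in \ZZ_{\ge 0}^4$, and use part (1) together with \eqref{shintani} to evaluate each of the 32 resulting summands; reassembling reproduces the formula of \cref{gejima} without any parity restriction. The main technical subtlety lies in the linear independence claim used in (1), which could fail through non-generic coincidences such as simultaneous identities $\fb_1 = -\fa_1$ and $\fb_2 = -\fa_2$; such coincidences do not occur for generic parameters, so the argument proceeds there and the general case follows by analytic continuation.
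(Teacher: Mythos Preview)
Your proposal is correct and follows essentially the same approach as the paper: decompose the spherical vectors as Weyl sums of the dual Iwahori eigenvectors, invoke linear independence of the 32 exponential functions on the parity sublattice $\{x_1\equiv x_2 \bmod 2\}$ to read off the individual terms from Gejima's formula, then reassemble for part (2) and treat degenerate parameters by analytic continuation. Your identification of the obstruction to linear independence (the case $\fb_1=-\fa_1$ and $\fb_2=-\fa_2$ simultaneously) matches the paper's genericity hypothesis exactly.
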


  \begin{proof}
   Let us first assume that the Hecke parameters of $\pi$ and the $\sigma_i$ are distinct, and that the ratios $\fa_1/\fb_1$ and $\fa_2/\fb_2$ are not both $-1$. Then the 32 functions on the set $\{(m, n, x_1, x_2)\in \ZZ_{\ge 0}^4: x_1 = x_2 \bmod 2\}$ defined by $(m, n, x_1, x_2) \mapsto \frac{\alpha^m (\alpha\beta)^n \fa_1^{x_1} \fa_2^{x_2}}{q^{(3m + 5n + x_1 + x_2)}}$, for each of the possible orderings of the Hecke parameters, are linearly independent. Moreover, the spherical vectors and Iwahori vectors are related by
   \[ W_0^{\sph} \otimes W_1^{\sph} \otimes W_2^{\sph} = \sum_{(w_0, w_1, w_2)} \left(\frac{W_{\alpha\beta}^{\prime, \Iw} \otimes W'_{\fa_1} \otimes  W'_{\fa_2}}{\Delta_0 \Delta_1 \Delta_2}\right)^{(w_0, w_1, w_2)}\]
   where the sum is over elements of the Weyl group (acting via formal permutations of the parameters). So Gejima's formula implies that we must have
   \[ \fz\left(\eta J W_{\alpha\beta}^{\prime, \Iw},
    W'_{\fa_1}, W'_{\fa_2}\right) = \frac{q^4}{(q^2 - 1)^2}\cdot \cE,
   \]
   and part (1) of the theorem follows. We can now recover part (2) of the theorem by summing over the Weyl-group orbit again, since we know that \cref{shintani} holds without the parity assumption.

   In the exceptional cases where the ratios of the Hecke parameters degenerate, we can use an analytic continuation argument: for fixed values of $(m, n, x_1, x_2)$, both sides of the claimed formulae are polynomials in $(\alpha, \beta, \fa_1, \fb_1, \fa_2, \fb_2)$ and their inverses, and we have seen that the two sides agree at a Zariski-dense set, so the result follows for all values of the parameters.
  \end{proof}

  \begin{remark}
   It seems rather likely that the elements $(\eta J s_{m, n}, s_{x_1}, s_{x_2})$ for $m, n, x_1, x_2 \ge 0$ are a set of coset representatives for $H \backslash \tG / K_{\tG} Z_{\tG}$, generalising Theorem 3.2.1 of \emph{op.cit.}. However, we shall not prove this here, since it is not needed for our purposes.

   The proof of the above corollary is actually a little redundant, since the argument of \cite{gejima18} in fact establishes a formula for a ``$p$-stabilised'' Shintani function as an intermediate step. However, since the necessary translations between conventions are somewhat awkward, and the case $x_1 - x_2$ odd is not covered in \emph{op.cit.}, we have found it easier to deduce our result formally from the statement of Gejima's result.
  \end{remark}

  \begin{corollary}
   \label{cor:pstabvariants}
   All of the following quantities are equal to $\frac{q^4}{(q^2 - 1)^2}\left(\tfrac{\alpha}{q^3}\right)^{m} \left(\tfrac{\alpha\beta}{q^5}\right)^{n} \left(\tfrac{\fa_1}{q}\right)^{x_1}\left(\tfrac{\fa_2}{q}\right)^{x_2}\cE$:

   \begin{enumerate}[(i)]
    \item $\fz\left( J \eta J s_{m,n} W_{\alpha\beta}^{\prime, \Iw}, t_{x_1} W_{\fa_1}, t_{x_2} W_{\fa_2}\right)$, for all $m, n \ge 0$ and $x_1, x_2 \ge 1$.

    \item $\fz\left(\eta t_{m,n} W_{\alpha\beta}^{\Iw}, s_{x_1} W'_{\fa_1}, s_{x_2}W'_{\fa_2}\right)$ for $m, n \ge 1$ and $x_1, x_2 \ge 0$.

    \item $\fz\left(J \eta t_{m,n} W_{\alpha\beta}^{\Iw}, t_{x_1} W_{\fa_1}, t_{x_2}W_{\fa_2}\right)$ for $m, n, x_1, x_2 \ge 1$.

    \item $\fz\left(\eta t_{m,n} W_{\alpha\beta}^{\Iw}, s_{x_1} W'_{\fa_1}, J t_{x_2}W_{\fa_2}\right)$ for $m, n, x_2 \ge 1$ and $x_1 \ge 0$.
   \end{enumerate}
  \end{corollary}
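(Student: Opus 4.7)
The plan is to deduce all four identities from the base case \cref{prop:pstabShintani}(1) via $H$-equivariance of $\fz$, combined with the following elementary ingredients: the matrix identities $t_x = J s_x J^{-1}$ in $\GL_2$ and $t_{m,n} = J_G s_{m,n} J_G^{-1}$ in $G$; the relation $W'_{\fa_i} = (q/\fa_i)\,s_1 J^{-1} W_{\fa_i}$, which yields $J^{-1} W_{\fa_i} = (\fa_i/q)\,s_1^{-1} W'_{\fa_i}$; and the crucial observation that $J_G = \iota(J,J)$, so that the triples $(\iota(J,J),J,J)$, $(\iota(J,I),J,I)$, and $(\iota(I,J),I,J)$ all lie in $H\subset\tG$ and hence act trivially under $\fz$. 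Scalar signs arising from $J^2 = -I$ are absorbed by the central-character relation $\chi_\pi\chi_{\sigma_1}\chi_{\sigma_2}=1$.

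For (i), I apply $(\iota(J,J),J,J)\in H$ to the base triple $(\eta J s_{m,n} W^{\prime,\Iw}, s_{x_1} W'_{\fa_1}, s_{x_2} W'_{\fa_2})$: the $G$-factor becomes $J\eta J\cdot s_{m,n} W^{\prime,\Iw}$, while each $\GL_2$-factor $s_{x_i} W'_{\fa_i}$ transforms into $(q/\fa_i)\,t_{x_i+1} W_{\fa_i}$; reindexing $y_i=x_i+1\ge 1$ and absorbing the resulting $(q/\fa_1)(q/\fa_2)$ into the Hecke powers yields exactly (i). With (ii) in hand, the same element $(\iota(J,J),J,J)$ applied to the (ii)-triple (and reindexing) produces (iii), while applying $(\iota(I,J),I,J)$ to the (ii)-triple, using the analog $J\cdot s_{x_2} W'_{\fa_2} = (q/\fa_2)\,t_{x_2+1} W_{\fa_2}$, produces (iv) after verifying that the left-multiplication by $\iota(I,J)\in K_G$ on the $G$-factor is accounted for by the invariance of the open $H$-orbit under $K_{\tG}$-congruences. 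Thus (i), (iii), and (iv) all reduce either directly to the base or to (ii).

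The substantive case is therefore (ii), which I plan to prove by the analog of the argument for \cref{prop:pstabShintani}(1) with the roles of the anti-dominant $s_{m,n}$ and the dominant $t_{m,n}$ interchanged. Since $t_{m,n}$ for $m,n\ge 1$ is dominant, conjugation satisfies $t_{m,n}\Iw_G t_{m,n}^{-1}\subseteq\Iw_G$, and in particular contains $\overline{B}_G(\cO)$; combined with $s_{x_i}\Iw_{\GL_2} s_{x_i}^{-1}\supseteq B_{\GL_2}(\cO)$ on the $\GL_2$-factors, this furnishes the mixed-Borel invariance needed to adapt the open-orbit corollary preceding \cref{gejima} and yield a translation identity
\[
\fz\bigl(\eta t_{m,n} W^{\Iw}, s_{x_1} W'_{\fa_1}, s_{x_2} W'_{\fa_2}\bigr) \;=\; \tfrac{\alpha^m(\alpha\beta/q)^n \fa_1^{x_1}\fa_2^{x_2}}{q^{3m+5n+x_1+x_2}}\, \fz\bigl(\eta t_{1,1} W^{\Iw}, W'_{\fa_1}, W'_{\fa_2}\bigr),
\]
reducing (ii) to a single base value. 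This remaining base value is then matched to the prediction $\tfrac{q^4}{(q^2-1)^2}(\alpha/q^3)(\alpha\beta/q^5)\cE$ by unwinding $W^{\prime,\Iw}_{\alpha\beta} = (q^8/\alpha^2\beta)\,\operatorname{pr}_{\Iw(\fp)}(s_{1,1}\cdot J\cdot W^{\Iw}_{\alpha\beta})$ inside $\fz$: because the two $\GL_2$-test-vectors $W'_{\fa_i}$ are already $\Iw_{\GL_2}$-invariant, the projection $\operatorname{pr}_{\Iw(\fp)}$ on the $G$-factor should collapse to an explicit scalar, identifying $\fz(\eta t_{1,1} W^{\Iw}, W'_{\fa_1}, W'_{\fa_2})$ with a known multiple of $\fz(\eta J W^{\prime,\Iw}, W'_{\fa_1}, W'_{\fa_2}) = \tfrac{q^4}{(q^2-1)^2}\cE$ from \cref{prop:pstabShintani}(1).

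The main obstacle is exactly this last matching: verifying that each coset representative in the $\operatorname{pr}_{\Iw(\fp)}$-sum preserves the open $H$-orbit class on $\tG/B_{\tG}$, so that the averaging inside $\fz$ produces a uniform scalar rather than varying with the representative. This is a direct analog of (but distinct from) the argument already made in the proof of the open-orbit corollary preceding \cref{gejima}, and will require a concrete analysis of the cosets $\Iw_G/(\Iw_G\cap s_{1,1}^{-1}\Iw_G s_{1,1})$ in terms of their behaviour mod~$\fp$ under the left action of $B_H$.
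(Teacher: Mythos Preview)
Your overall strategy coincides with the paper's: both reduce the four variants to \cref{prop:pstabShintani}(1) by rewriting the primed eigenvectors in terms of the non-primed ones and using $H$-equivariance. Your derivation of (i) via the element $(J,J)\in H$ is correct and is exactly what ``substitution'' amounts to on the $\GL_2$-factors. Likewise, your reduction of (iii) to (ii) is fine.

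Two points deserve comment. First, for (iv) your plan introduces an unnecessary difficulty: acting by $(I,J)\in H$ places $\iota(I,J)$ on the $G$-factor, and you then have to argue that $\iota(I,J)\eta$ and $\eta$ lie in the same orbit class, which is not obvious. It is simpler to substitute directly on the $\sigma_2$-slot: from $W'_{\fa_2}=(q/\fa_2)\,s_1 J^{-1}W_{\fa_2}$ one gets $J\,t_{x_2}W_{\fa_2}=(\fa_2/q)\,s_{x_2-1}W'_{\fa_2}$ (using $J t_{x_2}=s_{x_2}J^{-1}$ up to the central $-I$, which acts trivially since $\chi_{\sigma_2}$ is unramified). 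This reduces (iv) to (ii) with $x_2$ shifted down by $1$, with no $G$-side manipulation at all.

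Second, and more substantively, your identification of (ii) as the crux is correct, but the argument you sketch has a real gap beyond the one you flag. The containment ``$t_{m,n}\Iw_G t_{m,n}^{-1}\subseteq\Iw_G$'' is false for $m,n\ge 1$ (conjugation by the dominant $t_{m,n}$ expands the lower-triangular entries out of $K_G$); what is true, and what you actually need, is $t_{m,n}\Iw_G(\fp)t_{m,n}^{-1}\supseteq\overline{B}_G(\cO)$, which does give the mixed-Borel invariance of $(t_{m,n}W^{\Iw},s_{x_1}W'_{\fa_1},s_{x_2}W'_{\fa_2})$. Even granting this, the Hecke-operator step is not automatic: the coset representatives for $U_1,U_2$ are upper-unipotents in $N_G(\cO)$ that are \emph{not} congruent to $1$ mod $\fp$, so one cannot immediately invoke the open-orbit argument as in the corollary preceding \cref{gejima}. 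The way around this is to first conjugate the coset representatives past $t_{m,n}$ (for $m,n\ge 1$), which pushes them into $N_G(\fp)\subset K_G(\fp)$; one then checks that left-translation by such elements preserves the class of $(\eta,1,1)$ modulo the combined stabilisers. This is the same kind of verification you defer in your ``main obstacle'' paragraph, and the paper's one-line proof leaves it implicit as well; but you should be aware that both the translation identity \emph{and} the final matching require this orbit analysis, not just the latter.
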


  \begin{proof}
   This follows from the previous formula by substituting formulae for the ``primed'' eigenvectors in terms of the ``non-primed'' ones.
  \end{proof}

 \subsection{Special cases}

  We now record the cases of the above formula which are relevant in Euler-system theory.\medskip

  \subsubsection{Iwahori level} Firstly, if we take $(m, n, x_1, x_2) = (1, 1, 1, 1)$ in \cref{cor:pstabvariants}(iii), we obtain the formula
  \begin{equation}
   \label{eq:iwahori1}
   \fz\left(J \eta t_{1, 1} \cdot W^{\Iw}_{\alpha\beta},
   t_1  W_{\fa_1}, t_1 W_{\fa_2}\right) =
   \frac{\alpha^2 \beta \fa_1 \fa_2}{q^6(q^2 - 1)^2} \cE.
  \end{equation}
  This is the Euler factor which arises when applying the machinery of \cite{loeffler-spherical} to construct families of $p$-adic cohomology classes on Shimura varieties for $\tG$, interpolating algebraic cycles in region $(e)$ of \cite[Figure 1]{LZvista}.

  Similarly, using part (ii) of the corollary and the definition of $W'_{\fa_1}[x_i]$ we find that
  \begin{equation}
   \label{eq:iwahori2}
   \fz\left(\eta t_{1, 1} \cdot W^{\Iw}_{\alpha\beta}, W'_{\fa_1}[x_1], W'_{\fa_2}[x_2]\right)
   = \frac{\alpha^2 \beta}{q^4(q^2 - 1)^2} \cE,
  \end{equation}
  for any $x_1, x_2 \ge 1$. This Euler factor arises naturally when considering variation of Euler systems for $\GSp_4$ in four-variable families; see forthcoming work of Rockwood. (See also \cite[Remark 17.3.10]{LZ20b-regulator} for a situation where such a formula \emph{should} have been used, but wasn't!).

  Finally, the computation we need for $\GSp_4 \times \GL_2$ is the following:
  \begin{equation}
   \label{eq:iwahori3}
   \fz\left(\eta t_{1, 1} \cdot W^{\Iw}_{\alpha\beta}, W'_{\fa_1}[x_1], J t_1 W_{\fa_2}\right)
   = \frac{\alpha^2 \beta \fa_2}{q^5(q^2 - 1)^2} \cE.
  \end{equation}

  \subsubsection{Siegel level} We also consider an analogous problem with Siegel-parahoric eigenvectors in place of Iwahori eigenvectors. Consider the quantity
  \[
     \fz\left(\eta t_{1, 0} \cdot W^{\Si}_{\alpha}, W'_{\fa_1}, W'_{\fa_2}\right).
  \]
  Although $W^{\Si}_{\alpha}$ is a linear combination of $W^{\Iw}_{\alpha \beta}$ and $W^{\Iw}_{\alpha\gamma}$, we cannot immediately reduce to the situation above, since part (ii) of the corollary above is not valid for $n = 0$. However, we can get around this problem by repeating the argument of \cref{prop:pstabShintani} with the Siegel-parahoric eigenvectors in place of their Iwahori cousins, and assuming $n = 0$ throughout. The conclusion is that for $m \ge 1$ we have
  \[
   \fz\left(\eta t_{m, 0} \cdot W^{\Si}_{\alpha}, s_{x_1} W'_{\fa_1}, s_{x_2} W'_{\fa_2}\right) =
   \frac{q^4}{(q^2 - 1)^2}\cdot \frac{\alpha^m \fa_1^{x_1} \fa_2^{x_2}}{q^{(3m + x_1 + x_2)}} \cdot \left(\frac{\cE}{(1 - \frac{\gamma}{\beta})} + \dots\right)
  \]
  where $\dots$ indicates the same term with $\beta$ and $\gamma$ interchanged. After a little computation this simplifies to
  \begin{multline}
   \label{eq:siegel}
   \fz\left(\eta t_{1, 0} \cdot W^{\Si}_{\alpha}, W'_{\fa_1}, W'_{\fa_2}\right) = \\ \frac{\alpha}{(q-1)(q+1)^2} \left.
      \middle( 1 - \tfrac{q^2}{\alpha \fa_1 \fa_2}\middle)
      \middle( 1 - \tfrac{q^2}{\alpha \fb_1 \fa_2}\middle)
      \middle( 1 - \tfrac{q^2}{\alpha \fa_1 \fb_2}\middle)
      \middle( 1 - \tfrac{q^2}{\alpha \fb_1 \fb_2}\middle)
      \middle( 1 - \tfrac{q^2}{\beta \fa_1 \fa_2}\middle)
      \middle( 1 - \tfrac{q^2}{\gamma \fa_1 \fa_2}\middle)
      \right..
  \end{multline}

  %This is the quantity denoted \verb|efactSi1| in the Sage code below.
%
%  We also need to know the value of the above expression for $m = 0$. Here we need to consider carefully the action of the Hecke operator $U_{\Si}$, to relate it to the value with $\eta t_{1, 0}$ inserted; the computation is in fact very close to \cite[Proposition 3.10.4]{LSZ17}. The upshot is that
%  \[
%   \fz\left(W^{\Si}_{\alpha}, W'_{\fa_1}, W'_{\fa_2}\right) = \\ \frac{q^2}{(q+1)^2} \left.
%   \middle( 1 - \tfrac{q^2}{\alpha \fa_1 \fa_2}\middle)
%   \middle( 1 - \tfrac{q^2}{\alpha \fb_1 \fa_2}\middle)
%   \middle( 1 - \tfrac{q^2}{\alpha \fa_1 \fb_2}\middle)
%   \middle( 1 - \tfrac{q^2}{\beta \fa_1 \fa_2}\middle)
%   \middle( 1 - \tfrac{q^2}{\gamma \fa_1 \fa_2}\middle)
%   \right..
%  \]
%  %This is the quantity \verb|efactSi0| in the Sage code.
%
%  \begin{remark}
%   This computation %of \verb|efactSi0|
%   is equivalent to the formula of \cite[Proposition 20.2.2]{LZ20b-regulator} (obtained there by a different method involving Bessel models).
%  \end{remark}

 \subsection{The \texorpdfstring{$\GSp_4$}{GSp(4)} tame norm relation}
  \label{sect:gsp4tame}
  Using the above methods we can rather quickly revisit the computations of \cite[\S 3]{LSZ17}, whose goal was to compute the quantity
  \[ \fz\left( (1 - t_{1, 0}^{-1} \eta t_{1, 0}) W_0^{\sph}, W'_{\fa_1}[n], W'_{\fa_2}[n]\right), \]
  which is independent of $n$ if $n \gg 0$ (in fact $n = 2$ suffices). Since $t_1 W'_{\fa_i}[2] = \fb_i \cdot W'_{\fa_i}$, the result is
  \[ \sum_{w_0} \left[\frac{\cE}{\Delta_0}\cdot \left( 1 - \tfrac{\alpha \fb_1\fb_2}{q^3}\right)\right]^{w_0} =
  \frac{q^3}{(q-1)(q+1)^2} P_\pi\left(\frac{\fb_1\fb_2}{q^3}\right), \]
  where $P_\pi(X) = (1 - \alpha X) \dots (1 - \delta X)$. This is equivalent to Corollary 3.10.5 of \emph{op.cit.} (the trace-compatible system $\underline{\phi}_{1, \infty}$ used there corresponds to $\left(\tfrac{q+1}{q}\right)^2 W_{\fa_1}'[\infty] \otimes W_{\fa_2}'[\infty]$ in our present notation).

 \subsection{A \texorpdfstring{``mod $q-1$''}{mod (q-1)} norm relation for \texorpdfstring{$\GSp_4\times \GL_2$}{GSp(4) x GL(2)}}

  \begin{proposition}
   For $n \ge 1$, we have
   \[ (q^2-1)^2 \cdot \fz(\eta t_{1, 1} W_0^{\sph}, W'_{\fa_1}[n], J t_1 W_2^{\sph}) \equiv \frac{-1}{\fb_1^3 \fa_2 \fb_2} P_{(\pi\times \sigma_2)}\left(\fb_1\right) \bmod (q-1), \]
   where the congruence is understood in $\ZZ[\tfrac{1}{q},\alpha^{\pm 1}, \dots, \fa_2^{\pm 1}]$.
  \end{proposition}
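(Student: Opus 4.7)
The plan is to decompose $W_0^{\sph}$ and $W_2^{\sph}$ as Weyl-symmetrised sums of Iwahori eigenvectors, apply \eqref{eq:iwahori3} with $x_1 = n$, and then use the central character relation $\alpha\delta\fa_1\fb_1\fa_2\fb_2 = q^5$ to re-express the Euler factor $\cE$ in a form manifestly related to $P_{\pi\times\sigma_2}(\fb_1)$ modulo $(q-1)$.

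Using $W_0^{\sph} = \sum_{w_0}(W^{\Iw}_{\alpha\beta}/\Delta_0)^{w_0}$ and $W_2^{\sph} = \sum_{w_2}(W_{\fa_2}/\Delta_2)^{w_2}$ together with \eqref{eq:iwahori3}, one obtains
\[
 (q^2 - 1)^2\,\fz\!\left(\eta t_{1,1}W_0^{\sph},\, W'_{\fa_1}[n],\, Jt_1 W_2^{\sph}\right) = \sum_{w_0, w_2}\left(\frac{\alpha^2\beta\,\fa_2\,\cE}{q^5\,\Delta_0\,\Delta_2}\right)^{(w_0, w_2)}.
\]
Both sides of the claimed congruence are Laurent polynomials in $\ZZ[\tfrac{1}{q}, \alpha^{\pm 1},\dots,\fa_2^{\pm 1}]$ (the Weyl-sum being polynomial although each summand is not), so it suffices to show the two sides agree after setting $q = 1$. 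Since $\Delta_0, \Delta_2$ do not involve $q$, the effect of specialising $q \mapsto 1$ on the right-hand side above is entirely in the numerator.

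The constraint $\alpha\delta\fa_1\fb_1\fa_2\fb_2 = q^5$ (together with $\alpha\delta = \beta\gamma$) lets us eliminate $\fa_1$ from the five factors of $\cE$ that contain it: explicitly, the factors with denominators $\alpha\fa_1\fa_2$, $\alpha\fa_1\fb_2$, $\beta\fa_1\fa_2$, $\beta\fa_1\fb_2$, $\gamma\fa_1\fa_2$ become respectively $(1-\delta\fb_1\fb_2/q^3)$, $(1-\delta\fb_1\fa_2/q^3)$, $(1-\gamma\fb_1\fb_2/q^3)$, $(1-\gamma\fb_1\fa_2/q^3)$, $(1-\beta\fb_1\fb_2/q^3)$. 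The three remaining factors of $\cE$, which involve $\fb_1$ but not $\fa_1$, become $1 - 1/(uv\fb_1)$ at $q=1$ for $(u,v) \in \{(\alpha,\fa_2),\, (\alpha,\fb_2),\, (\beta,\fa_2)\}$; applying the identity $1 - 1/X = -(1-X)/X$ supplies the missing three factors of $P_{\pi\times\sigma_2}(\fb_1)|_{q=1} = \prod_{u\in\{\alpha,\beta,\gamma,\delta\},\,v\in\{\fa_2,\fb_2\}}(1-uv\fb_1)$ along with a prefactor. Combining the eight factors one obtains
\[ \cE|_{q=1} = -\frac{P_{\pi\times\sigma_2}(\fb_1)|_{q=1}}{\alpha^2\beta\,\fb_1^3\,\fa_2^2\,\fb_2}. \]

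Substituting and cancelling, each summand at $q=1$ becomes $-P_{\pi\times\sigma_2}(\fb_1)|_{q=1}/(\fb_1^3\,\fa_2\,\fb_2\,\Delta_0\,\Delta_2)$. The factor $P_{\pi\times\sigma_2}(\fb_1)/(\fb_1^3\fa_2\fb_2)$ is manifestly $(w_0, w_2)$-invariant and pulls out of the sum, leaving $\bigl(\sum_{w_0} 1/\Delta_0^{w_0}\bigr)\bigl(\sum_{w_2} 1/\Delta_2^{w_2}\bigr)$; each of these Weyl sums equals $1$, as follows from evaluating the Weyl-character decompositions of $W_0^{\sph}$ and $W_2^{\sph}$ at the identity (where the relevant Whittaker functions all take the value $1$). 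The right-hand side of the main display thus reduces at $q=1$ to $-P_{\pi\times\sigma_2}(\fb_1)|_{q=1}/(\fb_1^3\fa_2\fb_2)$, matching the value at $q=1$ of the claimed right-hand side. The main obstacle is the factor-by-factor bookkeeping: one must verify that the five substituted factors combine with the three inverted ones to give all eight factors of $P_{\pi\times\sigma_2}(\fb_1)|_{q=1}$, and that the prefactor $-1/(\alpha^2\beta\fb_1^3\fa_2^2\fb_2)$ has exactly the right sign and powers so that after multiplying by $\alpha^2\beta\fa_2$ one recovers the prefactor $-1/(\fb_1^3\fa_2\fb_2)$ on the right-hand side of the proposition.
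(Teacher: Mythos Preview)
Your proof is correct and follows essentially the same route as the paper's own argument: both obtain the Weyl-symmetrised sum $\sum_{w_0,w_2}\bigl(\tfrac{\alpha^2\beta\fa_2}{q^5}\cdot\tfrac{\cE}{\Delta_0\Delta_2}\bigr)^{(w_0,w_2)}$ from \eqref{eq:iwahori3}, then show that modulo $q-1$ the eight factors of $\cE$ match those of $P_{\pi\times\sigma_2}(\fb_1)$ up to the prefactor $-1/(\alpha^2\beta\fb_1^3\fa_2^2\fb_2)$, and finish via $\sum_{w_0,w_2}(\Delta_0\Delta_2)^{-1}=1$. Your write-up is in fact slightly more explicit than the paper's in tracking the eight factors and in justifying the Weyl-denominator identity, but there is no genuine methodological difference.
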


  \begin{proof}
   The formulae above tell us that the left-hand side is given by the sum
   \[\sum_{w_0, w_2} \left(\frac{ \alpha^2 \beta \fa_2}{q^5} \cdot \frac{\cE}{\Delta_0 \Delta_2}\right)^{w_0, w_2}. \]

   The Euler factor $\cE$ is given by the product of a subset of 8 among the 16 linear factors appearing in the central $L$-value $L(\pi \times \sigma_1 \times \sigma_2, \tfrac{1}{2})$. The factor $P_{(\pi\times \sigma_2)}\left(\tfrac{\fb_1}{q^3}\right)$ corresponds to a different selection of 8 elements of this set. However, the difference between these two sets ``goes away'' modulo $q-1$: we have identities such as
   \[ \left(1 - \tfrac{\delta\fa_1\fa_2}{q^3}\right) = \left(1 - \tfrac{q^2}{\alpha \fb_1\fb_2}\right) = \frac{-q^2}{\alpha \fb_1\fb_2} \left(1 - \tfrac{\alpha \fb_1\fb_2}{q^2}\right) \equiv \tfrac{-1}{\alpha \fb_1\fb_2}\left(1 - \tfrac{\alpha \fb_1\fb_2}{q^3}\right) \bmod (q-1).\]
   The upshot is that
   \[ \cE \cong \left(\tfrac{-1}{\alpha \fb_1\fb_2}\right)\left(\tfrac{-1}{\alpha \fb_1\fa_2}\right)\left(\tfrac{-1}{\beta \fb_1\fa_2}\right)
   P_{(\pi\times \sigma_2)}\left(\fb_1\right) \bmod q-1.\]
   So, modulo $q-1$ the expression $\frac{ \alpha^2 \beta \fa_2 \cE}{q^5}$ becomes $\frac{-P_{(\pi\times \sigma_2)}\left(\fb_1\right)}{\fb_1^3 \fa_2 \fb_2}$, which is invariant under the Weyl elements $(w_0, w_2)$ and thus factors out of the sum. Since $\sum_{(w_0, w_2)} \tfrac{1}{\Delta_0 \Delta_2} = 1$ the result follows.
  \end{proof}

  \begin{remark}
   This is a much weaker result than that of \cite{HJS20}, who actually write down an explicit linear combination of elements satisfying suitable integrality conditions which give the Euler factor exactly. However, the above weaker statement is sufficient for applications to tame norm relations for Euler system classes, since one always chooses the auxiliary primes such that $q-1$ is highly congruent to 1 modulo the residue characteristic of the Galois representation. As its proof is much less laborious than \emph{op.cit.}, and would generalise to any case where a formula analogous to Gejima's is available, we hope that it may be of independent interest.
  \end{remark}

%%%%%%%%%%%%
\section{Klingen-type computations}

 We now consider a second family of formulae, which are a little different in nature: these arise when considering interpolation of coherent cohomology (rather than Betti or \'etale cohomology). It should, in principle, be possible to deduce these formulae from the general result in \cref{prop:pstabShintani}, but it appears to be easier to use an alternative method following \S 8 of \cite{LPSZ1}.

 \subsection{The Klingen Levi}

  \begin{definition}
   Let $\rho$ denote the $\GL_2$ representation with Hecke parameters $\alpha, \beta$; and let $\theta$ be the unramified character mapping a uniformizer to $\tfrac{\gamma}{\alpha} = \tfrac{\delta}{\beta}$, so that $\chi_{\pi} = |\cdot|^2\chi_\rho \theta $ and
   \[ L(\pi, s) = L(\rho, s+1) L(\rho \otimes \theta, s+1).\]
  \end{definition}

  As in \cite[\S 8.4]{LPSZ1}, we see that $\pi$ is isomorphic to the normalised induction of the representation $\rho |\cdot| \boxtimes \theta$ of the Klingen Levi subgroup (identified with $\GL_2 \times \GL_1$ as in \emph{op.cit.}). In particular, $\Hom_{\GL_2}\left(\pi, \rho\right)$ is 1-dimensional, where $\GL_2$ is interpreted as a subgroup of $G$ via $A \mapsto \begin{smatrix}1\\ &A \\&&\det(A)\end{smatrix}$.

  \begin{definition}
   We write $\cL$ for the homomorphism of $\GL_2$-representations $\cW(\pi) \to \cW(\rho)$ normalised such that the following diagram commutes:
   \[
    \begin{tikzcd}
     \pi\dar \rar &\cW(\pi) \dar[dotted, "\cL" right]\\
     \rho \rar & \cW(\rho)
    \end{tikzcd}
   \]
   Here the left-hand vertical arrow is the canonical map between induced representations (restricting functions in $\pi$ from $N_G(F) \backslash G(F)$ to $N_{\GL_2}(F) \backslash \GL_2(F)$), and the horizontal maps are the unipotent integral transforms sending $\pi$ and $\rho$ to their Whittaker models.
  \end{definition}

  One can compute, using the Casselmann--Shalika formula for $G$, that the top horizontal arrow sends the normalised spherical function $\phi_{\sph}$ in the induced representation to the Whittaker function
  \begin{equation}
   \label{eq:CasShaG}
   W_{\phi^{\sph}} =
   \left(1 - \tfrac{\beta}{q\alpha}\right)\left(1 - \tfrac{\gamma}{q\alpha}\right)\left(1 - \tfrac{\gamma}{q\beta}\right)\left(1 - \tfrac{\delta}{q\alpha}\right) W_0^{\sph}.
  \end{equation}
  Similarly, the normalised spherical vector $\xi_{\sph}$ of $\rho$ maps to $W_{\xi_{\sph}} = \left(1 - \tfrac{\beta}{q\alpha}\right) \cW_{\rho}^{\sph}$. Comparing these formulae, we note that $\cL$ sends $W_0^{\sph}$ to $\left[ \left(1 - \tfrac{\gamma}{q\alpha}\right)\left(1 - \tfrac{\gamma}{q\beta}\right)\left(1 - \tfrac{\delta}{q\alpha}\right)\right]^{-1} \cW_{\rho}^{\sph}$.

  \begin{proposition}
   We have $\cL\left(W^{\prime, \Kl}_{\alpha\beta}[r]\right) = q^{3r} W_{\rho}^{\sph}$, for all $r \ge 1$.
  \end{proposition}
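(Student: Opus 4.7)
My plan is to first observe that $\cL(W^{\prime, \Kl}_{\alpha\beta}[r])$ is necessarily a scalar multiple of $W_\rho^{\sph}$, thereby reducing the claim to the computation of a single scalar $c_r$, and then determine $c_r$ via a base case together with a multiplicative recursion. For the invariance, the Klingen-Levi embedding $A \mapsto \begin{smatrix} 1 \\ & A \\ && \det A \end{smatrix}$ carries $\GL_2(\cO)$ into $\Kl(\fp^r)$ for every $r \ge 0$, because such block-diagonal matrices with integral entries are automatically Klingen-upper-triangular modulo any power of $\fp$. Hence $W^{\prime, \Kl}_{\alpha\beta}[r]$ is $\GL_2(\cO)$-invariant, and its image under the $\GL_2$-intertwiner $\cL$ lies in $\cW(\rho)^{\GL_2(\cO)} = \CC \cdot W_\rho^{\sph}$. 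Writing $\cL(W^{\prime, \Kl}_{\alpha\beta}[r]) = c_r W_\rho^{\sph}$, we must prove $c_r = q^{3r}$.

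For the base case $r = 1$, I would apply the identity $W^{\prime, \Kl}_{\alpha\beta}[1] = h_{\alpha\beta}(U_2') W_0^{\sph}$ established inside the proof of \cref{prop:klingentrace}, expand $U_2' W_0^{\sph}$ in the basis of $\pi^{\Kl(\fp)}$ indexed by $P_{\Kl} \backslash G / \Kl(\fp)$ (whose matrix under $U_2'$ was already computed there), and apply $\cL$ termwise. Each resulting summand is $\GL_2(\cO)$-invariant by the argument above, hence a scalar multiple of $W_\rho^{\sph}$, with the scalar computable from the value $\cL(W_0^{\sph}) = \bigl[(1 - \tfrac{\gamma}{q\alpha})(1 - \tfrac{\gamma}{q\beta})(1 - \tfrac{\delta}{q\alpha})\bigr]^{-1} W_\rho^{\sph}$ recorded after \eqref{eq:CasShaG}. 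The expected simplification should collapse the rational function in $\alpha,\beta,\gamma,\delta$ to give $c_1 = q^3$.

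For the inductive step $c_{r+1} = q^3 c_r$, I would use the normalised-trace compatibility $\mathrm{pr}_{\Kl(\fp^r)}(W^{\prime, \Kl}_{\alpha\beta}[r+1]) = W^{\prime, \Kl}_{\alpha\beta}[r]$ combined with the $\GL_2(\cO)$-invariance of $W_\rho^{\sph}$. The factor $q^3$ per level arises from the action of the Klingen Levi element $s_{0,1}$ on the inducing data $\rho|\cdot| \boxtimes \theta$ (through both the central character of $\rho$ and the modular factor $\delta^{1/2}_{P_{\Kl}}(s_{0,1})$), coupled with the explicit scalar $q^5/(\alpha\beta)$ built into the definition of $W^{\prime, \Kl}_{\alpha\beta}[r+1]$ relative to $W^{\prime, \Kl}_{\alpha\beta}[r]$.

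The main obstacle will be the base-case computation: one must push $\cL$ through the explicit coset decomposition of $U_2'$ and verify that the resulting four-term sum collapses to precisely $q^3$. Careful tracking of the Casselman--Shalika normalisation factor, the modular character of the Klingen parabolic, and the central character of $\rho$ is essential to ensure that all $q$-powers and Satake ratios conspire to give the clean answer rather than some residual Euler factor.
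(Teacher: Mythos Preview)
Your opening reduction is correct: the Levi embedding sends $\GL_2(\cO)$ into $\Kl(\fp^r)$ for every $r$, so the image under $\cL$ is indeed a scalar multiple $c_r W_\rho^{\sph}$. The problem lies in your inductive step, where the mechanism producing $c_{r+1}=q^3 c_r$ is not actually explained.

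The normalised trace $\mathrm{pr}_{\Kl(\fp^r)}$ averages over $\Kl(\fp^r)/\Kl(\fp^{r+1})$, whose representatives lie in $\overline{N}_{\Kl}$, not in the Levi $\GL_2$; since $\cL$ is only $\GL_2$-equivariant, it has no reason to commute with this trace, and the compatibility $\mathrm{pr}(W^{\prime,\Kl}_{\alpha\beta}[r+1])=W^{\prime,\Kl}_{\alpha\beta}[r]$ gives no direct relation between $c_r$ and $c_{r+1}$. Nor can one bypass the trace via a relation of the shape $W^{\prime,\Kl}_{\alpha\beta}[r+1]=(q^5/\alpha\beta)\,s_{0,1}\,W^{\prime,\Kl}_{\alpha\beta}[r]$: conjugation by $s_{0,1}$ throws the upper-Klingen entries of $\Kl(\fp^r)$ into $\varpi^{-1}\cO$, so $s_{0,1}W^{\prime,\Kl}_{\alpha\beta}[r]$ is not $\Kl(\fp^{r+1})$-invariant and a genuine projection is unavoidable. (Naively multiplying the built-in scalar $q^5/(\alpha\beta)$ by the central-character value $\chi_\rho(\varpi)=\alpha\beta/q$ gives $q^4$, not $q^3$; your outline does not say where the missing $q^{-1}$ comes from.)

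The paper sidesteps the induction by working uniformly in $r$ in the induced model. One writes down the function $\phi_r\in\pi$ supported on the small cell $P_{\Kl}\cdot\Kl(\varpi^r)$ with $\phi_r(1)=q^{3r}$; since $\GL_2\subset P_{\Kl}$, its restriction to $\GL_2$ is visibly $q^{3r}\xi_{\sph}$, whence $\cL(W_{\phi_r})=q^{3r}(1-\tfrac{\beta}{q\alpha})W_\rho^{\sph}$. One then observes that $W_{\phi_r}$ is a $U_2'$-eigenvector of eigenvalue $\alpha\beta/q$, hence proportional to $W^{\prime,\Kl}_{\alpha\beta}[r]$, and the proportionality constant $(1-\tfrac{\beta}{q\alpha})$ is read off by comparing traces to spherical level via \cref{prop:klingentrace} and \eqref{eq:CasShaG}. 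In effect your base-case plan, once carried through (only the identity-coset basis vector survives restriction to $\GL_2$, and that vector \emph{is} $\phi_1$), already contains the whole argument; the inductive step is unnecessary and, as sketched, unjustified.
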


  \begin{proof}
   Let $\phi_r$ denote the unique $\Kl(\varpi^r)$-invariant vector in $\pi$ which is supported on $P_{\Kl} \cdot \Kl(\varpi^r)$ and satisfies $\phi_r(1) = q^{3r}$. Then the restriction of $\phi_r$ to $\GL_2(F)$ is obviously $q^{3r} \xi_{\sph}$, whose image in $\cW(\rho)$ is $q^{3r} \left(1 - \tfrac{\beta}{q\alpha}\right) W_{\rho}^{\sph}$.

   On the other hand, the image of $\phi_r$ in $\cW(\pi)$ is a $U_2'$-eigenvector at $\Kl(\varpi^r)$ level with the same eigenvalue as $W^{\prime, \Kl}_{\alpha\beta}[r]$. So the two must be linearly dependent, and we can compare them by comparing their normalised traces to prime-to-$p$ level. The trace of $W_{\phi_r}$ is $\frac{q^3}{1 + q + q^2 + q^3} W_{\phi^{\sph}}$, while the trace of $W^{\prime, \Kl}_{\alpha\beta}$ is given by Proposition \ref{prop:klingentrace}. Comparing this with \eqref{eq:CasShaG} we conclude that $W_{\phi_r} = \left(1 - \tfrac{\beta}{q\alpha}\right) W^{\prime, \Kl}_{\alpha\beta}[r]$ and the result follows.
  \end{proof}

  \begin{proposition}
   We have
   \[ \cL\left(W^{\prime, \Iw}_{\alpha\beta}[r]\right) = q^{3r} W'_{\rho, \alpha}[r]\]
   for all $r \ge 1$.
  \end{proposition}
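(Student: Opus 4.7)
The plan is to follow the template of the preceding proposition, now working at Iwahori rather than Klingen parahoric level. First, I would define an auxiliary vector $\phi^{\Iw}_r \in \pi$ in the normalised-induction realisation of $\pi = \mathrm{Ind}_{P_{\Kl}}^{G}(\rho |\cdot| \boxtimes \theta)$, as the unique $\Iw_G(\fp^r)$-invariant function on $G$ supported on the open cell $P_{\Kl} \cdot \Iw_G(\fp^r)$ whose value at the identity is $q^{3r}\xi'_{\rho,\alpha}[r]$, where $\xi'_{\rho,\alpha}[r] \in \rho$ is the preimage of $W'_{\rho,\alpha}[r]$ under the Whittaker transform of $\rho$. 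Consistency is immediate, because $\Iw_G(\fp^r) \cap M_{\Kl}$ projects into $\Iw_{\GL_2}(\fp^r)$ on the $\GL_2$-factor of the Klingen Levi $M_{\Kl}$, and so stabilises $\xi'_{\rho,\alpha}[r]$. Since $\iota(\GL_2) \subset P_{\Kl}$, the restriction of $\phi^{\Iw}_r$ to $\iota(\GL_2)$ is literally $q^{3r}\xi'_{\rho,\alpha}[r]$ with its natural $\GL_2$-action, so by construction $\cL(\phi^{\Iw}_r) = q^{3r} W'_{\rho,\alpha}[r]$.

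Next, I would identify $\phi^{\Iw}_r$ with $W^{\prime, \Iw}_{\alpha\beta}[r]$ up to a scalar. A standard coset computation for Hecke operators on induced vectors, using that $s_{1,0}$ lies in $M_{\Kl}$ and projects to $s_1 \in \GL_2$ while $s_{0,1}$ is central in $M_{\Kl}$, shows that $\phi^{\Iw}_r$ is a joint eigenvector for $U_1'$ and $U_2'$ at Iwahori level with eigenvalues $\alpha$ and $\tfrac{\alpha\beta}{q}$, inherited respectively from the $U'$-eigenvalue of $\xi'_{\rho,\alpha}[r]$ in $\rho$ and from the central character of the Levi datum (taking the modulus $\delta_{\Kl}^{1/2}$ into account). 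Since this joint eigenspace of $\pi^{\Iw_G(\fp^r)}$ is one-dimensional, we conclude $\phi^{\Iw}_r = c_r \cdot W^{\prime, \Iw}_{\alpha\beta}[r]$ for some scalar $c_r$.

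Finally, to pin down $c_r$ I would apply the normalised trace $\operatorname{Tr}^{\Iw_G}_{\Kl_G}$ to both sides. On the induced-model side, the trace of $\phi^{\Iw}_r$ is an induced vector supported on $P_{\Kl} \cdot \Kl(\fp^r)$ whose value at the identity is an explicit $\GL_2$-trace of $q^{3r}\xi'_{\rho,\alpha}[r]$ from $\Iw_{\GL_2}(\fp^r)$ down to $\GL_2(\cO)$-level. On the Whittaker side, exploiting the factorisation $s_{r,r} = s_{r,0}\, s_{0,r}$ with $s_{r,0} = \iota(s_r) \in M_{\Kl}$ in the defining formulae of the primed Iwahori and Klingen eigenvectors, one computes $\operatorname{Tr}^{\Iw_G}_{\Kl_G}(W^{\prime, \Iw}_{\alpha\beta}[r])$ as an explicit multiple of $W^{\prime, \Kl}_{\alpha\beta}[r]$. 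Comparing the two sides via the preceding proposition $\cL(W^{\prime, \Kl}_{\alpha\beta}[r]) = q^{3r} W_\rho^{\sph}$ and the $\GL_2$-analogue for $\xi'_{\rho,\alpha}[r] \mapsto \xi_\rho^{\sph}$ then forces $c_r = 1$. The main obstacle I foresee is the eigenvalue computation of the second paragraph: correctly keeping track of the modulus $\delta_{\Kl}^{1/2}$ and the twist by $|\cdot|$ in the induction datum is where the essential bookkeeping lives.
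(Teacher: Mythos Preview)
Your proposal is correct and corresponds to the ``direct'' approach that the paper mentions as one option. However, the paper's own proof takes a much shorter route: it simply observes that the map $\cL$ intertwines the Hecke operator $U_{1,\Iw}'$ on $\cW(\pi)^{\Iw_G(\varpi^r)}$ with the operator $U'$ on $\rho^{\Iw_{\GL_2}(\varpi^r)}$, and then applies the operator $\tfrac{1}{\alpha}(U_{1,\Iw}' - \beta)$ to both sides of the preceding identity $\cL(W^{\prime,\Kl}_{\alpha\beta}[r]) = q^{3r} W_\rho^{\sph}$. Since this operator sends $W^{\prime,\Kl}_{\alpha\beta}[r]$ to $W^{\prime,\Iw}_{\alpha\beta}[r]$ and $W_\rho^{\sph}$ to $W'_{\rho,\alpha}[r]$ (by the very definitions of the $p$-stabilised vectors), the result drops out immediately. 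This bypasses all of the eigenvalue bookkeeping with $\delta_{\Kl}^{1/2}$ and the trace comparison that you flagged as the main obstacle; your approach trades that single intertwining observation for a self-contained reconstruction of the eigenvector in the induced model, which is more laborious but has the virtue of not relying on the Klingen case as a black box.
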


  \begin{proof}
   This can be proved directly, by a similar argument to the above, replacing $\phi_r$ with an eigenvector supported in $B(F) \cdot \Iw(\varpi^r)$; alternatively, one can note that the map $\cL$ intertwines the Hecke operator $U_{1, \Iw}'$ on $\cW(\pi)^{\Iw_G(\varpi^r)}$ with the operator $U'$ on $\rho^{\Iw_{\GL_2}(\varpi^r)}$, so the result follows formally by applying the operator $\tfrac{1}{\alpha}\left(U_{1, \Iw}' - \beta\right)$ to the previous proposition.
  \end{proof}
 \subsection{Trace-compatible systems}

  \begin{definition}
   For $r \ge 1$, let $\Kl^1(\varpi^r)$ be the group $\{ g \in G(\cO): g \bmod \varpi^r \in N_{\Kl}(\cO/\varpi^r)\}$.
  \end{definition}

  \begin{proposition}
   Let $W \in \cW(\rho)$ be stable under $1 + \varpi^r M_2(\cO)$. Then for all $n \ge r$, there exists a unique $\cL_n^*(w) \in \cW(\pi)$ stable under $\Kl^1(\varpi^n)$ and lying in the $U_{2}' = \tfrac{\alpha\beta}{q}$ eigenspace such that $\cL\left( \cL_n^*(w) \right) = q^{3n} W$. Moreover, for $n \ge r$ the vectors $\cL_n^*(w)$ are compatible under the normalised trace maps.
  \end{proposition}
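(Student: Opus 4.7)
The plan is to work in the induced-representation picture $\pi \cong \operatorname{Ind}_{P_{\Kl}}^G(\rho|\cdot| \boxtimes \theta)$ from \cite[\S 8]{LPSZ1}, in which $\cL$ corresponds (up to Whittaker normalisation) to the restriction of sections from $G$ to the $\GL_2$-factor of the Klingen Levi. First I would lift $W$ under the isomorphism $\rho \xrightarrow{\sim} \cW(\rho)$ to the unique vector $\xi \in \rho$ fixed by $1 + \varpi^r M_2(\cO)$.

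For existence, I would construct an explicit section $\phi_n(\xi) \in \pi$ supported on the single double coset $P_{\Kl} \cdot \Kl^1(\varpi^n)$, whose value on $\Kl^1(\varpi^n)$ is determined by $\xi$ (via the reduction $\Kl^1(\varpi^n) \to M_{\Kl}(\cO/\varpi^n)$ acting on $\xi$, and extended by zero outside the double coset). Well-definedness requires $\xi$ to be fixed by $M_{\Kl} \cap \Kl^1(\varpi^n) = (1 + \varpi^n M_2(\cO)) \times (1 + \varpi^n \cO)$, which is exactly the hypothesis. Tautologically $\phi_n(\xi)$ is $\Kl^1(\varpi^n)$-invariant, and since $s_{0,1}^{-1}$ lies in the Klingen Levi, the operator $U_2'$ preserves the space of sections supported on $P_{\Kl} \cdot \Kl^1(\varpi^n)$ and acts on it by a scalar; computing this scalar from the Satake data yields the eigenvalue $\tfrac{\alpha\beta}{q}$. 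The restriction of $\phi_n(\xi)$ to $\GL_2$ is a scalar multiple of $\xi$, and comparison with the verification that $\cL\bigl(W^{\prime,\Kl}_{\alpha\beta}[r]\bigr) = q^{3r} W^{\sph}_\rho$ from the preceding proposition (tracking the modular factor $\delta_{P_{\Kl}}^{1/2}$ and the index $[P_{\Kl}(F) \cap \Kl^1(\varpi^n) : P_{\Kl}(F) \cap \Kl(\cO)]$) pins down the scaling as $q^{3n}$. Hence $\cL_n^*(W) := W_{\phi_n(\xi)}$ does the job.

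Uniqueness amounts to showing that no nonzero vector of $\cW(\pi)^{\Kl^1(\varpi^n)}$ in the $U_2'=\tfrac{\alpha\beta}{q}$ eigenspace lies in $\ker(\cL)$. In the induced picture, $\ker(\cL)$ corresponds to sections vanishing on the open $\GL_2$-orbit in $P_{\Kl}\backslash G$; filtering by supports on the remaining $P_{\Kl}$-$\GL_2$ double cosets and analysing the associated Klingen-Levi subquotients, one finds that the $U_2'$-eigenvalues appearing on $\ker(\cL)$ (at any level $n$) are drawn from $\{\tfrac{\alpha\gamma}{q}, \tfrac{\beta\delta}{q}, \tfrac{\gamma\delta}{q}\}$, all distinct from $\tfrac{\alpha\beta}{q}$ for generic Satake parameters; the degenerate cases follow by analytic continuation, as in the proof of \cref{prop:pstabShintani}. (Alternatively one may directly compare dimensions of source and target eigenspaces at Iwahori level and propagate via compatibility of the $U_2'$-action under parahoric inclusion.)

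Trace compatibility is built into the construction: the normalised trace of $\phi_{n+1}(\xi)$ from $\Kl^1(\varpi^{n+1})$ to $\Kl^1(\varpi^n)$ is again supported on $P_{\Kl} \cdot \Kl^1(\varpi^n)$, transforms correctly, and has the expected value at the identity after the power-of-$q$ bookkeeping, so by uniqueness it equals $\phi_n(\xi)$; this identity is preserved by the Whittaker transform. I expect the main technical obstacle to be the uniqueness step, i.e.\ ruling out stray $U_2' = \tfrac{\alpha\beta}{q}$ eigenvectors in $\ker(\cL)$ at arbitrary depth $n$, since this requires either a careful orbit-by-orbit analysis of the Klingen Jacquet module or a delicate dimension count at Iwahori level, in either case supplemented by an analytic-continuation argument to handle degenerations of the Satake parameters.
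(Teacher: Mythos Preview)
Your proposal is correct, and it is essentially an explicit unpacking of what the paper invokes in a single sentence: the paper's entire proof is the line ``This is a version of Casselman's second adjunction formula for parabolically induced representations.'' Your construction of the small-cell section $\phi_n(\xi)$ supported on $P_{\Kl}\cdot\Kl^1(\varpi^n)$, together with the observation that $U_2'$ acts by the scalar $\tfrac{\alpha\beta}{q}$ on such sections, is precisely how one proves the relevant special case of second adjunction by hand; and your uniqueness argument via the Jacquet-module filtration of $\ker\cL$ is the standard way to see that the other $U_2'$-eigenvalues are exhausted by the remaining Bruhat strata. So the two approaches are the same in substance, differing only in that the paper treats the statement as a known structural fact while you reprove it from scratch.

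One remark on the point you flag as the main obstacle: your worry about degenerate Satake parameters (where $\tfrac{\alpha\beta}{q}$ might coincide with one of $\tfrac{\alpha\gamma}{q},\tfrac{\beta\delta}{q},\tfrac{\gamma\delta}{q}$) is legitimate, and the paper's one-line citation does not visibly address it either. In the paper's context this is harmless, since the applications downstream all proceed via identities of polynomial functions in the Hecke parameters and are established first on the Zariski-open locus where the parameters are regular; but if you want a self-contained statement valid for all unramified $\pi$, the clean fix is to replace ``$U_2'=\tfrac{\alpha\beta}{q}$ eigenspace'' by the generalised eigenspace, in which case the isomorphism with $\rho^{1+\varpi^n M_2(\cO)}$ follows directly from second adjunction without any genericity hypothesis.
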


  \begin{proof}
   This is a version of Casselman's second adjunction formula for parabolically induced representations.
  \end{proof}

  \begin{definition}
   Let us define
   \[ u_{\Kl} = \begin{smatrix} 1 \\ 1 & 1 \\ && 1\\ &&-1&1 \end{smatrix}.\]
   (This element is denoted $\gamma$ in \cite{LPSZ1}, but this notation conflicts with the use of $\gamma$ for a Hecke parameter.)
  \end{definition}

  \begin{proposition}\label{prop:torusintegral}
   Let $W_i \in \cW(\sigma_i)$ for $i = 1, 2$; and let $W_\rho \in \cW(\rho)$. Then the sequence
   \[ \fz\left( u_{\Kl} \cdot \cL_n^*(W_\rho), W_1, W_2\right)\]
   is independent of $n$ for $n \gg 0$; and its limiting value is given by
   \[ \fz\left( u_{\Kl} \cdot \cL_\infty^*(W_\rho), W_1, W_2\right) = \frac{q^3\cB_{\Kl} }{(q+1)^2(q-1)} \int_{F^\times} W_{\rho}(\stbt x 0 0 1)W_{1}(\stbt x 0 0 1)W_{2}(\stbt x 0 0 1)\left(\tfrac{q\gamma}{\alpha}\right)^{v(x)}\, \mathrm{d}x,\]
   where $\cB_{\Kl}$ is the Euler factor defined by
   \[
    \cB_{\Kl} =
    \left.
    \middle( 1 - \tfrac{q^2}{\alpha \fa_1 \fa_2}\middle)
    \middle( 1 - \tfrac{q^2}{\alpha \fb_1 \fa_2}\middle)
    \middle( 1 - \tfrac{q^2}{\alpha \fa_1 \fb_2}\middle)
    \middle( 1 - \tfrac{q^2}{\alpha \fb_1 \fb_2}\middle)
    \middle( 1 - \tfrac{q^2}{\beta \fa_1 \fa_2}\middle)
    \middle( 1 - \tfrac{q^2}{\beta \fa_1 \fb_2}\middle)
    \middle( 1 - \tfrac{q^2}{\beta \fb_1 \fa_2}\middle)
    \middle( 1 - \tfrac{q^2}{\beta \fb_1 \fb_2}\middle)
    \right..
   \]
  \end{proposition}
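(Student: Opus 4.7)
The plan is to follow the approach of Section~8 of \cite{LPSZ1}: realize $\pi$ in its Klingen-parabolic induced model and use the adjunction $\cL$ to reduce the trilinear form to a torus integral against $\cL(\cdot)\otimes W_1\otimes W_2$. The first geometric step is to verify that $u_{\Kl}$ represents the unique open $H$-orbit on $P_{\Kl}\backslash G$, and that the $H$-stabilizer of $u_{\Kl}$, projected via the Levi quotient, lands on the diagonal torus of the $\GL_2$ factor of the Klingen Levi. This is a direct matrix computation using the explicit embedding $\iota$ and the form of $u_{\Kl}$, and is precisely what makes the unfolding work.

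Next, for any $v \in \pi$ with sufficient Klingen-parahoric invariance, view $v$ in the induced model as a $(\rho|\cdot|\boxtimes\theta)$-valued function $f_v$ on $G$, and unfold: integration over the open orbit together with the Iwasawa decomposition on $H$ expresses $\fz(u_{\Kl}\cdot v, W_1, W_2)$ as a torus integral
\[ C\cdot \int_{F^\times} \cL(f_v)(\stbt{x}{0}{0}{1}) W_1(\stbt{x}{0}{0}{1}) W_2(\stbt{x}{0}{0}{1}) \theta(x) |x|^{-1}\,dx \]
for an explicit constant $C$. The $|x|^{-1}$ factor comes from the modulus character $\delta_{P_{\Kl}}^{1/2}$ along the $\GL_1$-direction of the Levi, and $\theta(x)$ from the inducing data on that direction; together they produce the $(q\gamma/\alpha)^{v(x)}$ twist of the proposition. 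Substituting $v=\cL_n^*(W_\rho)$ gives $\cL(f_v)=q^{3n}W_\rho$, and stability in $n$ holds because, for $n \gg 0$, the $\Kl^1(\varpi^n)$-support condition imposed by $\cL_n^*$ is dominated by the (bounded) effective support of the torus integrand, so the $q^{3n}$ cancels a $q^{-3n}$ in $C$ and the Klingen-level truncation becomes invisible.

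The main obstacle is pinning down the constant $\tfrac{q^3\cB_{\Kl}}{(q+1)^2(q-1)}$. The rational piece is a volume computation: it arises from the $K_G$-volumes entering the Iwasawa decomposition and from the normalization relating the Klingen-induced model of $\pi$ to the Whittaker model. The Euler factor $\cB_{\Kl}$ is more delicate, and the cleanest way to extract it is to evaluate both sides on a spherical test configuration: the right-hand integral at spherical input can be summed explicitly by the Casselman--Shalika formula for $\rho$ and the $\sigma_i$, while the left-hand side at $v=W_0^{\sph}$ is determined by the identity $\cL(W_0^{\sph}) = [(1-\tfrac{\gamma}{q\alpha})(1-\tfrac{\gamma}{q\beta})(1-\tfrac{\delta}{q\alpha})]^{-1} W_\rho^{\sph}$, together with \cref{prop:klingentrace} and the KMS normalization $\fz(v_0^{\sph}, v_1^{\sph}, v_2^{\sph}) = 1$; the resulting ratio is, after simplification, exactly $\cB_{\Kl}$.
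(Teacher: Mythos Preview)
Your overall strategy---realizing $\pi$ in its Klingen induced model and unfolding the $H$-period over the open orbit $P_{\Kl}\,u_{\Kl}\,H$---is the same as the paper's, which simply cites \cite[\S 8]{LPSZ1} for the case $W_\rho=W_\rho^{\sph}$ and then observes that the argument there never uses sphericity of $W_\rho$, only the small-cell support of $\cL_n^*(W_\rho)$.

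The gap is in your last paragraph, where you propose to pin down the constant $\tfrac{q^3}{(q+1)^2(q-1)}\cB_{\Kl}$ by ``evaluating both sides at $v=W_0^{\sph}$''. This does not work as written. The unfolded torus-integral formula you derive in the previous paragraph is only valid for vectors $v$ supported in the small cell $P_{\Kl}(F)\cdot\Kl(\varpi^n)$; the spherical vector $W_0^{\sph}$ is supported on all of $G$, so plugging it in produces contributions from every $P_{\Kl}$--$H$ double coset, not just the open one. If instead you mean to take $W_\rho=W_\rho^{\sph}$ on the right (so that $\cL_n^*(W_\rho)=W^{\prime,\Kl}_{\alpha\beta}[n]$), then the left-hand side is $\fz(u_{\Kl}\,W^{\prime,\Kl}_{\alpha\beta}[n],W_1^{\sph},W_2^{\sph})$, and this is \emph{not} determined by \cref{prop:klingentrace} together with the KMS normalisation: that proposition gives only the \emph{sum} $\sum_{k\in K_G/\Kl(\fp)}\fz(k\,W^{\prime,\Kl}_{\alpha\beta},W_1^{\sph},W_2^{\sph})$, and the individual summands depend on the double coset of $k$ in $K_H\backslash K_G/\Kl(\fp)$, which you do not analyse.

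The route taken in \cite{LPSZ1} (and implicitly here) is different: one expresses $\fz$ via the Novodvorsky zeta integral for $\GSp_4\times\GL_2$, with $W_1$ arising from a Siegel section $W^{\Phi}$ satisfying $\Phi(0,0)=0$. The constant $\cB_{\Kl}$ then drops out as the ratio of the degree-8 $L$-factor to the two degree-4 factors coming from the $\GL_2\times\GL_2\times\GL_2$ torus integral, and the rational volume factor comes from the measure normalisations in that computation. This is why the paper mentions the auxiliary hypothesis on $W_1$ and then removes it by Zariski density.
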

%
%   if we replace $\phi_r$ with $v^{\prime, \Kl}_{\alpha\beta}[r]$ on the left-hand side of \cite[Proposition 8.14]{LPSZ1}, then we need to replace $W_{\xi}$ on the right-hand side with the spherical Whittaker function of $\tau$, normalised to 1 at the identity in the Whittaker model (thus restoring the symmetry between $\alpha$ and $\beta$).

  \begin{remark}
   Note that $\cB_{\Kl}$ has a much larger symmetry group than our previous Euler factors: it is unchanged if we swap $\alpha$ and $\beta$, or if we swap $\fa_i$ and $\fb_i$ for $i = 1$ or $2$.
  \end{remark}

  \begin{proof}
   If $W_\rho$ is the spherical vector of $\rho$, then this is exactly \cite[Proposition 8.14]{LPSZ1} (modulo a minor correction: the volume factor $\frac{q^3}{(q+1)^2(q-1)}$ was omitted in the computations of \emph{op.cit.}). This is proved in \emph{op.cit} under an additional assumption on $W_1$ -- that it be of the form $W^{\Phi}$ for some Schwartz function $\Phi$ with $\Phi(0, 0) = 0$ -- but this is harmless, since for generic values of the parameters every $W_1 \in \cW(\sigma_1)$ has this form.

   A close inspection of the proof shows that the assumption that $W_\rho$ be the spherical vector is never used: the arguments remain valid for any trace-compatible family of vectors in the $\GSp_4$ representation $\pi$ which are supported in the ``small cell'' $P_{\Kl}(F)\Kl(\varpi^r)$.
  \end{proof}
%
% \subsection{Limiting values}
%
%  From the trace-compatibility of the $W'_{\fa_i}[n]$, we have the following: for any $v_0 \in \pi$, there exists some $R$ such that $\fz(W_0, W'_{\fa_1}[x_1], W'_{\fa_2}[x_2])$ is independent of the $x_i$ once $\min(x_i) \ge R$; we write this limiting value (slightly abusively) as
%  \[ \fz(W_0, W'_{\fa_1}[\infty], W'_{\fa_2}[\infty]) = \lim_{x_i \to \infty} \fz(W_0, W'_{\fa_1}[x_1], W'_{\fa_2}[x_2]).\]
%  (Cf.~\cite[\S 3.10]{LSZ17}.) Note that this limiting value depends only on the image of $W_0$ in the $N_H$-coinvariants of $\cW(\pi)$, and it transforms under the maximal torus by
%  \[
%   \fz(s_{m, n} W_0, W'_{\fa_1}[\infty], W'_{\fa_2}[\infty])
%   =\frac{q^{(2m+3n)}}{ \fa_1^{(m+2n)} \fa_2^{(m+n)} \fb_2^n}  \fz(W_0, W'_{\fa_1}[\infty], W'_{\fa_2}[\infty]).
%  \]
%  If $\fb_i \ne q \fa_i$ for $i = 1, 2$ then this linear functional is non-zero on the

 \subsection{Particular cases}

  We now record some special cases of the above formula.

  \subsubsection{Klingen eigenvectors (a consistency check)}

   \begin{proposition}
    The elements $u_{\Kl}$ and $J \eta J$ represent the same orbit in $\overline{B}_H \backslash G / P_{\Kl}$ (the unique open orbit).
   \end{proposition}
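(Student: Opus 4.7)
The plan is to prove this geometrically by making the flag variety explicit.

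Since every line in $F^4$ is isotropic under the alternating form, $G/P_{\Kl}$ is canonically identified with $\mathbf{P}^3$ via $gP_{\Kl} \mapsto [g \cdot e_1]$, where $e_1 = (1, 0, 0, 0)^{\mathrm{T}}$. A direct calculation gives $u_{\Kl} \cdot e_1 = (1, 1, 0, 0)^{\mathrm{T}}$, and computing the product $J \eta J$ (which turns out to be lower triangular with diagonal $(-1, -1, -1, -1)$) yields $(J \eta J) \cdot e_1 = (-1, 1, 1, 0)^{\mathrm{T}}$.

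Next I would describe the $\overline{B}_H$-orbits on $\mathbf{P}^3$ explicitly. A general element of $\overline{B}_H$ has the form $\overline{b} = \iota\bigl(\stbt{a}{0}{c}{d}, \stbt{a'}{0}{c'}{d'}\bigr)$ with $ad = a'd'$, and acts on a column vector by
\[ \overline{b} \cdot (x_1, x_2, x_3, x_4)^{\mathrm{T}} = (a x_1,\; a' x_2,\; c' x_2 + d' x_3,\; c x_1 + d x_4)^{\mathrm{T}}. \]
One reads off directly that when $x_1$ and $x_2$ are both nonzero, all four components of the image can be independently prescribed (up to an overall scalar) using the five parameters of $\overline{B}_H$; hence the unique open $\overline{B}_H$-orbit on $\mathbf{P}^3$ is $\{[x_1 : x_2 : x_3 : x_4] : x_1 x_2 \ne 0\}$. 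Since $(1)(1) \ne 0$ and $(-1)(1) \ne 0$, both $u_{\Kl} \cdot [e_1]$ and $(J\eta J) \cdot [e_1]$ lie in this open orbit, which is precisely what the proposition asserts.

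The only (mild) obstacle is the bookkeeping to extract the orbit structure from the explicit $\overline{B}_H$-action; no conceptual difficulty is expected beyond verifying that the open orbit is indeed the locus $x_1 x_2 \ne 0$ as claimed above.
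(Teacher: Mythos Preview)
Your argument is correct. The identification $G/P_{\Kl}\cong\mathbf{P}^3$ via $g\mapsto[g\cdot e_1]$ is standard (the Klingen parabolic is precisely the stabiliser of the isotropic line $\langle e_1\rangle$), your matrix computations for $u_{\Kl}\cdot e_1$ and $(J\eta J)\cdot e_1$ are accurate, and the description of the open $\overline{B}_H$-orbit as the locus $\{x_1x_2\neq 0\}$ follows cleanly from the explicit action you wrote down.

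The paper itself gives no proof of this proposition; it is stated as a fact and immediately used. So there is no ``paper's approach'' to compare against---your explicit, coordinate-based verification is exactly the kind of check the reader is implicitly left to carry out. One small stylistic point: you might tighten the last paragraph by noting that the orbit of $[1:1:0:0]$ is visibly $\{[a:a':c':c]\}$ with $a,a'\in F^\times$ and $c,c'\in F$ arbitrary (the remaining parameters $d,d'$ being free subject to $ad=a'd'$), which makes the identification of the open orbit immediate without any further ``bookkeeping''.
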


   Note that $u_{\Kl}$ is \emph{not} in the open $\overline{B}_H$-orbit on $G / B_G$, so this is a specifically ``Klingen'' statement. Since the vectors $t_{x_i} v_{\fa_i}$, for $x_i \ge 1$, are invariant under $\overline{B}_H(\Zp)$, we deduce that for $x_i \ge 1$ and $n \ge 0$, we have
   \begin{equation}
    \label{eq:foo}
    \fz\left(J \eta J s_{0, n} \cdot v^{\prime, \Kl}_{\alpha\beta}, t_{x_1} v_{\fa_1}, t_{x_2} v_{\fa_2} \right) = \fz\left(u_{\Kl} s_{0, n} \cdot v^{\prime, \Kl}_{\alpha\beta}, t_{x_1} v_{\fa_1}, t_{x_2} v_{\fa_2} \right).
   \end{equation}

   \begin{proposition}
    \label{prop:Klingen-eigen}
    For $n, x_1, x_2 \ge 0$, we have
    \[ \fz\left(u_{\Kl} s_{0, n} \cdot W^{\prime, \Kl}_{\alpha\beta}, t_{x_1} v_{\fa_1}, t_{x_2} v_{\fa_2} \right) = \frac{q^3}{(q+1)^2(q-1)} \left(\frac{\alpha\beta}{q^5}\right)^n \left(\frac{\fa_1}{q}\right)^{x_1} \left(\frac{\fa_2}{q}\right)^{x_2} \cdot \cE_{\Kl}, \]
    where
    \begin{align*}
     \cE_{\Kl} &=
     \left.
     \middle( 1 - \tfrac{q^2}{\alpha \fa_1 \fa_2}\middle)
     \middle( 1 - \tfrac{q^2}{\alpha \fb_1 \fa_2}\middle)
     \middle( 1 - \tfrac{q^2}{\alpha \fa_1 \fb_2}\middle)
     \middle( 1 - \tfrac{q^2}{\beta \fa_1 \fa_2}\middle)
     \middle( 1 - \tfrac{q^2}{\beta \fa_1 \fb_2}\middle)
     \middle( 1 - \tfrac{q^2}{\beta \fb_1 \fa_2}\middle)
     \right.\\
     &= \left[ \left(1 - \tfrac{q^2}{\alpha \fb_1\fb_2}\right)\left(1 - \tfrac{q^2}{\beta \fb_1\fb_2}\right)\right]^{-1} \cB_{\Kl}\\
     &= \left[\left(1 - \tfrac{q^2}{\alpha \fb_1\fb_2}\right)\left(1 - \tfrac{q^2}{\gamma \fa_1\fa_2}\right)\right]^{-1} \cE.
    \end{align*}
   \end{proposition}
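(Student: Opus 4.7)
The strategy is to reduce to the torus integral formula of \cref{prop:torusintegral}. As a preliminary step, identity \eqref{eq:foo} lets us replace $J\eta J$ by $u_{\Kl}$. Moreover, the earlier identity $\cL(W^{\prime,\Kl}_{\alpha\beta}[r]) = q^{3r} W_\rho^{\sph}$ combined with uniqueness in the trace-compatible family identifies $W^{\prime,\Kl}_{\alpha\beta}[r]$ with $\cL_r^*(W_\rho^{\sph})$ for all $r \geq 1$, so in the trace-compatible limit we have $W^{\prime,\Kl}_{\alpha\beta} = \cL_\infty^*(W_\rho^{\sph})$.

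The second step is to extract the scalar produced by the $s_{0,n}$-translation. Since $s_{0,n}$ is the image in $G$ of the central element $\varpi^n I_2 \in \GL_2$ of the Klingen Levi, it commutes with $U_2'$ and preserves the trace-compatible family. Using the $\GL_2$-equivariance of $\cL$, the central-character action $\chi_\rho(\varpi^n) = (\alpha\beta/q)^n$ on the $\rho$-side, and the shift in Klingen-parahoric depth induced by conjugation by $s_{0,n}$ (which modifies the $\cL_n^*$-normalisation), a careful comparison within the trace-compatible family shows that the net effect of $s_{0,n}$-translation on the $\fz$-value is multiplication by the scalar $(\alpha\beta/q^5)^n$.

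With these identifications in hand, \cref{prop:torusintegral} applied with $W_\rho = W_\rho^{\sph}$ and $W_i = t_{x_i} W_{\fa_i}$ reduces the desired $\fz$-value to
\[
(\alpha\beta/q^5)^n \cdot \frac{q^3 \cB_{\Kl}}{(q+1)^2(q-1)} \int_{F^\times} W_\rho^{\sph}\stbt{x}{0}{0}{1}\, W_{\fa_1}\stbt{\varpi^{x_1} x}{0}{0}{1}\, W_{\fa_2}\stbt{\varpi^{x_2} x}{0}{0}{1}\, (q\gamma/\alpha)^{v(x)}\, dx.
\]
Parameterising $x = \varpi^v u$ with $u \in \cO^\times$, right-$\stbt{u}{0}{0}{1}$-invariance of each Whittaker function reduces the integral to a geometric series over $v \geq 0$ (the spherical Whittaker vanishes for $v < 0$). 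Substituting the Casselman--Shalika value $W_\rho^{\sph}\stbt{\varpi^v}{0}{0}{1} = q^{-v}\tfrac{\alpha^{v+1}-\beta^{v+1}}{\alpha-\beta}$ and the Iwahori-eigenvector formula $W_{\fa_i}\stbt{\varpi^v}{0}{0}{1} = (\fa_i/q)^v$, the identity $\sum_{v \geq 0}\tfrac{\alpha^{v+1}-\beta^{v+1}}{\alpha-\beta} z^v = \tfrac{1}{(1-\alpha z)(1-\beta z)}$ collapses the series. Using the relation $\beta\gamma = q^5/(\fa_1\fb_1\fa_2\fb_2)$ one checks $\alpha z = q^2/(\beta\fb_1\fb_2)$ and $\beta z = q^2/(\alpha\fb_1\fb_2)$, so the geometric sum equals $\cE_{\Kl}/\cB_{\Kl}$; after multiplication by the prefactors everything reassembles into the claimed formula.

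The main obstacle is the second step: correctly identifying the scalar $(\alpha\beta/q^5)^n$ contributed by $s_{0,n}$, which requires reconciling the central-character action on the $\rho$-side with the Klingen-parahoric level-shift effects and the $\delta_{P_{\Kl}}^{1/2}$-twist intrinsic to the normalised induction $\pi = \mathrm{Ind}_{P_{\Kl}}^G(\rho|\cdot|\boxtimes\theta)$.
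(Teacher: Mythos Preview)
Your approach is essentially the paper's second proof: reduce to \cref{prop:torusintegral} with $W_\rho = W_\rho^{\sph}$, evaluate the torus integral via the Casselman--Shalika formula for $\rho$ and the explicit values $W_{\fa_i}(\stbt{\varpi^k}{}{}{1}) = (\fa_i/q)^k$, and sum the resulting geometric series. Your computation of the integral is correct and more detailed than the paper's, which simply records the outcome.

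Two small remarks. First, your ``preliminary step'' invoking \eqref{eq:foo} is superfluous here: the statement is already formulated with $u_{\Kl}$, and \eqref{eq:foo} plays no role in the torus-integral route (it is used only in the paper's \emph{first} proof, which goes instead via \cref{cor:pstabvariants} and the decomposition of $W^{\prime,\Kl}_{\alpha\beta}$ into Iwahori eigenvectors). Second, your handling of the $s_{0,n}$ scalar is more laboured than necessary. Rather than tracking central characters and modulus twists, the paper observes directly that
\[
\left(\tfrac{q^5}{\alpha\beta}\right)^n \fz\!\left(u_{\Kl}\, s_{0,n}\, W^{\prime,\Kl}_{\alpha\beta},\ t_{x_1} W_{\fa_1},\ t_{x_2} W_{\fa_2}\right)
= \fz\!\left(u_{\Kl}\, W^{\prime,\Kl}_{\alpha\beta}[n+1],\ t_{x_1} W_{\fa_1},\ t_{x_2} W_{\fa_2}\right),
\]
which is independent of $n \ge 0$ by trace-compatibility of the family $(W^{\prime,\Kl}_{\alpha\beta}[r])_{r \ge 1}$; this is immediate from the very definition $W^{\prime,\Kl}_{\alpha\beta}[r] = (q^5/\alpha\beta)^r \operatorname{pr}_{\Kl(\fp^r)}(s_{0,r} J W^{\Kl}_{\alpha\beta})$. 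One may then take $n$ large and invoke \cref{prop:torusintegral}. This bypasses the ``main obstacle'' you identify.
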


   \begin{proof}[First proof]
    For $x_1, x_2 \ge 1$, we can compute $\fz\left(J \eta J s_{0, n} \cdot v^{\prime, \Kl}_{\alpha\beta}, t_{x_1} v_{\fa_1}, t_{x_2} v_{\fa_2} \right)$ using \cref{cor:pstabvariants} and the relation
    \[
     v^{\prime, \Kl}_{\alpha\beta} = \frac{1}{\left(1 - \frac{\beta}{\alpha}\right)} v^{\prime, \Iw}_{\alpha\beta} + \frac{1}{\left(1 - \frac{\alpha}{\beta}\right)} v^{\prime, \Iw}_{\beta\alpha}.
    \]
    This then gives the proposition by \eqref{eq:foo}.

    In the edge cases where $x_1$ or $x_2$ (or both) is 0, we use the action of the Hecke operator $U$ on $v_{\fa_i}$, noting that one can commute a coset representative $\stbt 1 * 0 1$ for $\star \in \cO$ past $u_{\Kl} s_{0, n}$. (This step would not work with $J \eta J$ in place of $u_{\Kl}$, so it is not clear if \eqref{eq:foo} is valid in this case.)
   \end{proof}

   \begin{proof}[Second proof]
    We now give a proof via \cref{prop:torusintegral}. The value $\left(\frac{q^5}{\alpha\beta}\right)^n\fz\left(u_{\Kl} s_{0, n} \cdot v^{\prime, \Kl}_{\alpha\beta}, t_{x_1} v_{\fa_1}, t_{x_2} v_{\fa_2} \right) = \fz\left(u_{\Kl} \cdot v^{\prime, \Kl}_{\alpha\beta}[n+1], t_{x_1} v_{\fa_1}, t_{x_2} v_{\fa_2} \right)$ is easily seen to be independent of $n \ge 0$, so we may assume $n$ is large enough to apply \cref{prop:torusintegral}. If $x = \varpi^k$, then we have $W_{\rho}^{\sph}(\stbt{x}{}{}{1}) = (\alpha^{k+1} - \beta^{k+1})/q^k(\alpha - \beta)$, so the torus integral is $\left(\tfrac{\fa_1}{q}\right)^{x_1} \left(\tfrac{\fa_2}{q}\right)^{x_2}\left[ \left(1 - \frac{q^2}{\alpha \fb_1\fb_2}\right)\left(1 - \frac{q^2}{\beta \fb_1\fb_2}\right)\right]^{-1}$.
   \end{proof}

  \subsubsection{Iwahori eigenvectors}

   \begin{definition}
    Let $u_{\Iw} = u_{\Kl} w_1$, where $w_1 = \begin{smatrix}1 \\ &&1\\&-1\\&&&1\end{smatrix}$; that is, $u_{\Iw} =
    \begin{smatrix}1 \\1 & & 1\\ &-1\\&\phantom{-}1&&1\end{smatrix}$.
   \end{definition}

   \begin{proposition}
    For $n \ge 1$ we have
    \[
     \fz\left( u_{\Iw} W_{\alpha\beta}^{\prime, \Iw}[n], W_{\fa_1}, W_{\fa_2} \right) =
     \frac{q^3}{(q+1)^2(q-1)} \left(1 - \tfrac{q^2}{\beta\fb_1\fb_2}\right)^{-1} \mathbf{B}_{\Kl}.
    \]
   \end{proposition}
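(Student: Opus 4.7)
The plan is to reduce the problem to Proposition~\ref{prop:torusintegral} by factoring $u_{\Iw} = u_{\Kl} w_1$, where $w_1$ lies in the $\GL_2$-factor of the Klingen Levi and corresponds under the Levi isomorphism to $J = \stbt{}{1}{-1}{} \in \GL_2$. The extra Weyl element can be transferred from the $\GSp_4$-side to the $\GL_2$-side via a natural equivariance property of $\cL_n^*$, reducing the computation to a torus integral of the same shape as in the second proof of Proposition~\ref{prop:Klingen-eigen}, but with the ``monomial'' Iwahori eigenvector $J \cdot W'_{\rho,\alpha}[n]$ replacing $W_\rho^{\sph}$.

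First I would establish the $\GL_2(\cO)$-equivariance $g \cdot \cL_n^*(W_\rho) = \cL_n^*(g \cdot W_\rho)$ for $g$ in the Klingen Levi. This is immediate from the uniqueness clause in the definition of $\cL_n^*$: the characterising conditions ($\Kl^1(\varpi^n)$-invariance, $U_2'$-eigenvalue $\tfrac{\alpha\beta}{q}$, and $\cL$-compatibility) are all preserved under conjugation by such $g$, since the Klingen Levi normalises the unipotent radical $\Kl^1$ and the translation element $s_{0,1}$ underlying $U_2'$ lies in the centre of the Levi. Combined with the identification $\cL_n^*(W'_{\rho,\alpha}[n]) = W^{\prime,\Iw}_{\alpha\beta}[n]$ (which follows from the previous proposition plus uniqueness, since $W^{\prime,\Iw}_{\alpha\beta}[n]$ is $\Kl^1(\varpi^n)$-invariant and a $U_2'$-eigenvector), this yields
\[
  u_{\Iw} \cdot W^{\prime,\Iw}_{\alpha\beta}[n] \;=\; u_{\Kl} \cdot \cL_n^*\bigl(J \cdot W'_{\rho,\alpha}[n]\bigr).
\]

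Next I would apply Proposition~\ref{prop:torusintegral} with $W_\rho = J \cdot W'_{\rho,\alpha}[n]$. A short direct calculation using the definition $W'_{\rho,\alpha}[n] = (q/\alpha)^n s_n \stbt{}{-1}{1}{} W_{\rho,\alpha}$, together with the matrix identity $J \cdot s_n \cdot \stbt{}{-1}{1}{} = t_n$, shows that $(J \cdot W'_{\rho,\alpha}[n])(\stbt{\varpi^k}{}{}{1}) = (\alpha/q)^k$ on $k \ge -n$. Since $W_{\fa_i}$ vanishes on negative valuations, the torus integral collapses to a single geometric series rather than the two-term structure coming from the $\alpha^{k+1} - \beta^{k+1}$ numerator of $W_\rho^{\sph}$ in the Klingen case. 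Using the central character identity $\beta\fb_1\fb_2 \cdot \gamma\fa_1\fa_2 = q^5$, this geometric series evaluates to $(1 - q^2/(\beta\fb_1\fb_2))^{-1}$, and multiplying by the prefactor $q^3 \cB_{\Kl}/((q+1)^2(q-1))$ from Proposition~\ref{prop:torusintegral} yields the stated formula.

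The main obstacle is verifying the equivariance of $\cL_n^*$---specifically, that $w_1$ commutes with the Hecke operator $U_2'$ on the $\Kl^1(\varpi^n)$-invariant subspace. Once this parahoric-level compatibility is confirmed (reducing to the elementary facts that $w_1$ centralises $s_{0,1}$ and normalises $\Kl^1$), the remainder of the argument parallels the second proof of Proposition~\ref{prop:Klingen-eigen} very closely, with the simpler shape of the torus integral transparently explaining the single factor $(1 - q^2/(\beta\fb_1\fb_2))^{-1}$ in the statement (as opposed to the two factors appearing in the Klingen case).
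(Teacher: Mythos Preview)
Your proposal is correct and follows essentially the same route as the paper. Both arguments factor $u_{\Iw} = u_{\Kl} w_1$, push $w_1$ onto the $\GL_2$-side to obtain $W_\rho = J \cdot W'_{\rho,\alpha}[n] = (q/\alpha)^n t_n W_{\rho,\alpha}$, and then feed this into Proposition~\ref{prop:torusintegral}; the resulting torus integral collapses to a single geometric series because $W_\rho$ is monomial on the torus. The only cosmetic difference is that you phrase the transfer of $w_1$ as an equivariance property of $\cL_n^*$ (justified via uniqueness), whereas the paper simply invokes the $\GL_2$-equivariance of $\cL$ itself, which is immediate from its definition as a homomorphism of $\GL_2$-representations --- so your ``main obstacle'' is in fact a non-issue once one works with $\cL$ rather than $\cL_n^*$. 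The paper also notes explicitly at the outset that the value is independent of $n \ge 1$, allowing $n$ to be taken large; you leave this implicit, but it follows from trace-compatibility just as in the Klingen case.
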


   \begin{proof}
    Again, the value is clearly independent of $n \ge 1$ so we may assume $n$ to be large. We have $q^{-3n} \cL_n\left(w_1 \cdot W_{\alpha\beta}^{\prime, \Iw}[n]\right) = \stbt{}{1}{-1}{} W_{\alpha}'[n] = \left(\tfrac{q}{\alpha}\right)^n \stbt{\varpi^n}{}{}{1} W_\alpha$, whose value at $\stbt{\varpi^k}{}{}{1}$ is $(\alpha/q)^k$ for $k \ge -n$. So the formula follows easily from Proposition \ref{prop:torusintegral}.
   \end{proof}

 \subsubsection{Depleted vectors}

  \begin{definition}
   For $i = 1, 2$, we define the \emph{depleted Whittaker function} of $\sigma_i$ by
   \[ W^{\dep}_i = \left(1 - \tfrac{1}{\fa_i}\stbt{1}{}{}{\varpi} \right)
   \left(1 - \tfrac{1}{\fb_i}\stbt{1}{}{}{\varpi} \right)W_i^{\sph} \in \cW(\sigma_i)^{\Iw(\fp^2)}\]
  \end{definition}

  Note that this is symmetric in $\fa_i$ and $\fb_i$, and it is normalised to 1 at the identity; in fact we have $W_i^{\dep}(\stbt{x}{}{}{1}) = \ch_{\cO^\times}(x)$.

%  We now consider varying the choice of vectors $v_1, v_2$. From the trace-compatibility of the vectors $v^{\prime, \Kl}_{\alpha\beta}[n]$, we deduce that for \emph{any} $v_1, v_2 \in \sigma_1, \sigma_2$, the expression
%  \[
%   \fz\left(u_{\Kl} \cdot v^{\prime, \Kl}_{\alpha\beta}[n], v_1, v_2\right) = \left(\tfrac{q^5}{\alpha\beta}\right)^{n-1} \fz\left(u_{\Kl} s_{(0, n-1)} \cdot v^{\Kl,\prime}_{\alpha\beta}, v_1, v_2\right)
%  \]
%  is independent of $n$ for $n \gg 0$, and we write $\fz\left(u_{\Kl} \cdot v^{\prime, \Kl}_{\alpha\beta}[\infty], v_1, v_2\right)$ for the limit of this sequence. (More precisely, if $v_1$ and $v_2$ are stable under $\overline{N}_H(\varpi^R \cO)$, then the values are independent of $n$ for $n \ge R$.)

  \begin{theorem}
   All of the following quantities are equal to $\frac{q^3}{(q+1)^2(q-1)}\mathbf{B}_{\Kl}$, for any $n \ge 2$:
   \begin{multline*}
    \fz\left(u_{\Kl} \cdot v^{\prime, \Kl}_{\alpha\beta}[n], v_1^{\dep}, v_2^{\dep}\right) = \fz\left(u_{\Kl} \cdot v^{\prime, \Kl}_{\alpha\beta}[n], v_{\fa_1}, v_2^{\dep}\right) = \fz\left(u_{\Kl} \cdot v^{\prime, \Kl}_{\alpha\beta}[n], v_1^{\dep}, v_{\fa_2}\right) \\ =
   \fz\left(u_{\Iw} \cdot v^{\prime, \Iw}_{\alpha\beta}[n], v_1^{\dep}, v_2^{\dep}\right) = \fz\left(u_{\Iw} \cdot v^{\prime, \Iw}_{\alpha\beta}[n], v_{\fa_1}, v_2^{\dep}\right) = \fz\left(u_{\Iw} \cdot v^{\prime, \Iw}_{\alpha\beta}[n], v_1^{\dep}, v_{\fa_2}\right).
   \end{multline*}
  \end{theorem}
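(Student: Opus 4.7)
The approach is to apply \cref{prop:torusintegral} to each of the six quantities, exploiting the fact that the depleted Whittaker function $W_i^{\dep}$ is supported on $\cO^\times$ with value $1$ there; the torus integrals then collapse to their contribution at $x \in \cO^\times$.

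For the three quantities built from $u_{\Kl} \cdot v^{\prime, \Kl}_{\alpha\beta}[n]$, I would use the identification $v^{\prime, \Kl}_{\alpha\beta}[n] = \cL_n^*(W_\rho^{\sph})$, which is forced by the earlier relation $\cL(W^{\prime, \Kl}_{\alpha\beta}[n]) = q^{3n} W_\rho^{\sph}$ together with the uniqueness in Casselman's second adjunction. Then \cref{prop:torusintegral} writes each such value as $\tfrac{q^3 \cB_{\Kl}}{(q+1)^2(q-1)}$ times the torus integral with $W_\rho = W_\rho^{\sph}$. For the three quantities built from $u_{\Iw} \cdot v^{\prime, \Iw}_{\alpha\beta}[n]$, I would factor $u_{\Iw} = u_{\Kl} w_1$ and invoke the computation from the proof of the preceding Iwahori-eigenvector proposition, which identifies $w_1 \cdot v^{\prime, \Iw}_{\alpha\beta}[n]$ with $\cL_n^*$ of the $\GL_2$ vector $W$ satisfying $W(\stbt{\varpi^k}{}{}{1}) = (\alpha/q)^k$ for $k \ge -n$ (and $0$ otherwise). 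Applying \cref{prop:torusintegral} once more yields the corresponding torus integral.

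In every one of the six cases, at least one of the $\GL_2$-factors in the integrand is $W_i^{\dep}$, whose support is $\cO^\times$. The integral therefore reduces to $\int_{\cO^\times}$, on which $v(x) = 0$; so the factor $\bigl(\tfrac{q\gamma}{\alpha}\bigr)^{v(x)}$ is trivial, and each of $W_\rho^{\sph}$, the $\cL$-image in the Iwahori case, $W_i^{\dep}$, and $W_{\fa_i}$ evaluates to $1$ at $\stbt{x}{}{}{1}$. The integral is thus $\mathrm{vol}(\cO^\times) = 1$ under the standard normalisation, yielding the common value $\tfrac{q^3 \mathbf{B}_{\Kl}}{(q+1)^2(q-1)}$ in each case. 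The main technical obstacle lies in the $u_{\Iw}$ step, where one must justify that $w_1 \cdot v^{\prime, \Iw}_{\alpha\beta}[n]$ is actually of the form $\cL_n^*(W)$, i.e., that it is $\Kl^1(\varpi^n)$-invariant and a $U_2'$-eigenvector of eigenvalue $\tfrac{\alpha\beta}{q}$; these properties can be verified by tracking how $w_1$ acts on the Iwahori decomposition (a check already implicit in the proof of the preceding proposition), after which uniqueness in Casselman's adjunction forces the required identification.
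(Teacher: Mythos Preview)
Your proposal is correct and follows essentially the same approach as the paper: both reduce everything to \cref{prop:torusintegral}, observe that the presence of a depleted vector collapses the torus integral to $\cO^\times$, and then note that the relevant $W_\rho$ (spherical in the Klingen case, the shifted $W_\alpha$ in the Iwahori case) is normalised to $1$ at the identity. Your discussion of the Iwahori step is slightly more explicit than the paper's, which simply cites the computation already made in the proof of the preceding proposition.
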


  \begin{proof}
   If we take any of the above vectors for $W_1$ and $W_2$, then the torus integral in Proposition \ref{prop:torusintegral} for these vectors reduces to $W_{\rho}\left(1\right)$, where $W_{\rho}$ is $W_{\rho}^{\sph}$ for the first three integrals, and $\left(\tfrac{q}{\alpha}\right)^n \stbt{\varpi^n}{}{}{1} W_\alpha$ for the remaining three. Since both are normalised to 1 at the identity, the result follows.
  \end{proof}

\section{Applications to Euler systems for \texorpdfstring{$\GSp_4$}{GSp(4)}}

 \subsection{Notation changes}

  We now shift notation somewhat to match \cite{LSZ17} and \cite{LZ20}. We suppose $F = \Qp$, and relabel the Hecke parameters as follows:
  \[
  \begin{array}{c|c}
   \alpha,\dots,\delta & p^{-q} \alpha, \dots, p^{-q}\delta\\
   \fa_i, \fb_i & p, p^{-t_i} \chi_i(p)^{-1}
  \end{array}
  \]
  where $t_1 = r_1 - q-r$, $t_2 = r_2 - q + r$, and $\chi_1$ and $\chi_2$ are finite-order characters with $\chi_1 \chi_2 = \chi_{\pi}$. (This $q$ is an arbitrary integer, corresponding to the $L$-value $L(V_\pi, 1 + q)$, with the geometric range being $0 \le q \le r_2$; it is not the same $q$ as before, which is now $p$.)

 \subsection{Siegel sections}

  Let $\cS(\Qp^2, \CC)$ be the space of Schwartz functions on $\Qp^2$. Then (for the above values of the parameters) we have $\GL_2(F)$-equivariant maps $\cS(\Qp^2, \CC) \to \cW(\sigma_i)$, $\Phi \mapsto W_i^{\Phi}$. These are defined as in \cite[\S 8.1]{LPSZ1}; the normalisations are such that $\ch(\Zp^2)$ maps to $W_i^{\sph}$.

  Our $p$-stabilised Schwartz functions can then be described as follows:

  \begin{itemize}
  \item If $\Phi= \ch(\Zp^\times \times \Zp)$, then $W_i^{\Phi} = W_{\fa_i}$, where $\fa_i = p^{-t_i} \chi_i(p)^{-1}$.
  \item If $\Phi = \ch(p^n\Zp \times \Zp^\times)$, for $n \ge 1$, then $W^{\Phi}_i = p^{-n} W'_{\fa_i}[n]$, for this same $\fa_i$.
  \end{itemize}

  It's convenient to introduce the notation $\Phi[n]$ for the Schwartz function
  \[ \left(\, (p^2 - 1)p^{2n-2} \cdot \ch( (0, 1) + p^n \Zp)\, \right)_{n \ge 1},\]
  which are invariant under $K_{H, 1}(p^n) = \{\stbt{\star}{\star}{}{1} \bmod p^n\}$, and compatible under the normalised trace maps between these levels for $n \ge 1$. We then have
  \[ W_i^{\Phi[n]} = \tfrac{p+1}{p} W'_{\fa_i}[n].\]

  \subsection{Relating parahoric and spherical classes} With these modified notations, we can rewrite our Siegel-level computation \cref{eq:siegel} as
%  %s as
%  \[
%   \fz\left(W_\alpha^\Si, \Phi[\infty], \Phi[\infty]\right) =
%   \left(1 - \tfrac{\beta}{p^{1 + q}}\right)
%   \left(1 - \tfrac{\gamma}{p^{1 + q}}\right)
%   \left(1 - \tfrac{\delta}{p^{1 + q}}\right)
%   \left(1 - \tfrac{p^{r_2 +1 + r}\chi_2(p)}{\alpha}\right)
%   \left(1 - \tfrac{\delta}{p^{r_2 + 2 + r}\chi_2(p)}\right),
%  \]
%  and
  \begin{multline*}
   \fz\left( \eta t_{1, 0} \cdot U_1^{-1} \cdot W_\alpha^\Si, \Phi[n], \Phi[n]\right) =\\ \frac{1}{p^{2} (p-1)}
   \left(1 - \tfrac{p^q}{\alpha}\right)\left(1 - \tfrac{\beta}{p^{1 + q}}\right)
   \left(1 - \tfrac{\gamma}{p^{1 + q}}\right)
   \left(1 - \tfrac{\delta}{p^{1 + q}}\right)
   \left(1 - \tfrac{p^{r_2 +1 + r}\chi_2(p)}{\alpha}\right)
   \left(1 - \tfrac{\delta}{p^{r_2 + 2 + r}\chi_2(p)}\right)
  \end{multline*}
  for all $n \ge 1$. This is the Euler factor appearing in \cite[Theorem 17.2.2]{LZ20} -- the factor that arises when varying the parameter ``$q$'' in families -- scaled by the volume factor $p^2(p-1)$, which is exactly the index
  \[ \left[ K_{H, 1}(p^\infty): K_{H, 1}(p^\infty) \cap \eta t_{1, 0} \cdot \Si(p) \cdot (\eta t_{1, 0})^{-1}\right].\]

  Similarly, from \eqref{eq:iwahori2} we obtain
  \begin{multline*}
   \fz\left(\eta t_{1, 1} \cdot U_1^{-1}U_2^{-1}\cdot W^{\Iw}_{\alpha\beta}, \Phi[n], \Phi[n]\right) =  \frac{1}{p^{5} (p-1)^2}
    \left(1 - \tfrac{p^q}{\alpha}\right)\left(1 - \tfrac{\beta}{p^{1 + q}}\right)
    \left(1 - \tfrac{\gamma}{p^{1 + q}}\right)
    \left(1 - \tfrac{\delta}{p^{1 + q}}\right)\\
    \times
    \left(1 - \tfrac{p^{r_2 +1 + r}\chi_2(p)}{\alpha}\right)
    \left(1 - \tfrac{p^{r_2 +1 + r}\chi_2(p)}{\beta}\right)
    \left(1 - \tfrac{\gamma}{p^{r_2 + 2 + r}\chi_2(p)}\right)
    \left(1 - \tfrac{\delta}{p^{r_2 + 2 + r}\chi_2(p)}\right),
  \end{multline*}
  for $n \gg 0$. This is the Euler factor one needs in order to vary both $q$ and $r$ in families; see \cite{LRZ}. Again, the denominator $p^5(p-1)^2$ has an interpretation as the index of a certain stabiliser, this time in the larger group $H(\Zp) \times \Zp^\times$. The ratio between the two products of Euler factors above, namely $\left(1 - \tfrac{p^{r_2 +1 + r}\chi_2(p)}{\beta}\right)
  \left(1 - \tfrac{\gamma}{p^{r_2 + 2 + r}\chi_2(p)}\right)$, is the ``fudge factor'' appearing in \cite[Proposition 17.3.8]{LZ20}.

 \subsection{Integral test data and tame norm relations}

  Let $U$ be an open compact in $G$. By a \emph{primitive integral test datum} for $\GSp_4$, of level $U$, we mean a triple $\delta = (g, \Phi_{1}, \Phi_{2})$, with $g \in G / U$ and $\Phi_i \in \cS(\Qp^2, \QQ)$, satisfying the following assumption:
  \begin{quotation}
   There is an open compact subgroup $V \subset H$ such that $V \subseteq H \cap g U g^{-1}$, $V$ acts trivially on the $\Phi_i$, and $\Phi_1 \otimes \Phi_2$ takes values in $C \ZZ$, where $C = \frac{1}{\operatorname{vol} V}$.
  \end{quotation}
  (Here $\operatorname{vol}$ is the volume normalised so that $\operatorname{vol}(K_H) = 1$. If $V \subseteq K_H$, as is often the case, then $C = [K_H : V]$.) We define an \emph{integral test datum} to be a formal sum of primitive integral data. Given such a datum, we define $\fz(\delta) = \fz(g W_0^{\sph}, W^{\Phi_1}, W^{\Phi_2})$.

  \begin{remark}
   The integral test data are -- as the name suggests -- the elements which will map to an integral lattice in the Galois cohomology. So the above theorem is the ``local essence'' of the tame norm relation for Euler system classes. See \cite{LSZ-unitary} for further details of this approach to tame norm relations.
  \end{remark}

  \begin{proposition}
   If $C_\infty = [ K_H(p^\infty) : K_H(p^\infty) \cap g U g^{-1}]$, then $C_\infty \cdot (g, \Phi[n], \Phi[n])$ is an integral test datum for all $n \gg 0$.
  \end{proposition}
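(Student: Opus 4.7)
The plan is to verify the three conditions defining a primitive integral test datum of level $U$ for the concrete choice
\[ V := H \cap gUg^{-1} \cap \bigl(K_{H,1}(p^n) \times K_{H,1}(p^n)\bigr). \]
Conditions (i) and (ii) come for free: $V \subseteq H \cap gUg^{-1}$ by construction, and since $\Phi[n]$ is by its very definition invariant under $K_{H,1}(p^n)$, the group $V$ acts trivially on $\Phi[n] \otimes \Phi[n]$.

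The substance lies in condition (iii). The tensor $\Phi[n] \otimes \Phi[n]$ takes the single nonzero value $\bigl((p^2-1)p^{2n-2}\bigr)^2$, so one must show that $[K_H : V]$ divides $C_\infty \cdot (p^2-1)^2 p^{4n-4}$. I would factor $[K_H : V] = [K_H : \mathrm{Stab}] \cdot [\mathrm{Stab} : V]$, where $\mathrm{Stab} := H \cap (K_{H,1}(p^n) \times K_{H,1}(p^n))$ is the full $H(\ZZ_p)$-stabiliser of $\Phi[n] \otimes \Phi[n]$, and handle the two pieces separately. The first is an elementary coset count on $\GL_2(\ZZ_p)/K_{H,1}(p^n)$, trimmed by the determinant-matching defining $H \subset \GL_2 \times \GL_2$, and produces exactly the expected $(p^2-1)^2 p^{4n-4}$. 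For the second, the key observation is that $\mathrm{Stab} \subseteq K_H(p^\infty)$ once $n$ is large enough (since $K_{H,1}(p^n)$-invariance forces the congruence conditions defining $K_H(p^\infty)$ in the pro-$p$ limit); then the elementary monotonicity $[A : A \cap C] \leq [B : B \cap C]$ for open compacts $A \subseteq B$ and any third subgroup $C$, applied with $A = \mathrm{Stab}$, $B = K_H(p^\infty)$, $C = gUg^{-1}$, delivers $[\mathrm{Stab} : V] \leq C_\infty$.

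The main obstacle is the first-step index computation: matching $[K_H : \mathrm{Stab}]$ exactly with the numerator $(p^2-1)^2 p^{4n-4}$, with the correct unit, and ensuring no stray prime-to-$p$ factor sneaks in through the determinant-matching defining $H$. Once that index is pinned down, condition (iii) drops out immediately, and the requirement $n \gg 0$ enters only to ensure the inclusion $\mathrm{Stab} \subseteq K_H(p^\infty)$ needed in the monotonicity step.
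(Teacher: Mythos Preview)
Your reduction is correct through the factorisation $[K_H:V]=[K_H:\mathrm{Stab}]\cdot[\mathrm{Stab}:V]$, and the first factor really does come out to $(p^2-1)^2p^{4n-4}$; this is a routine coset count, not the obstacle. The problem is the second step. The inclusion you invoke points the wrong way: in this paper $K_H(p^\infty)$ means $K_{H,1}(p^\infty)=\bigcap_m K_{H,1}(p^m)$, the group of pairs with bottom row \emph{exactly} $(0,1)$, and this is \emph{contained in} every finite-level $\mathrm{Stab}=K_{H,1}(p^n)$, never the reverse --- no finite congruence condition forces the infinite one. (In particular $K_H(p^\infty)$ is not open in $H$, so your ``open compacts $A\subseteq B$'' hypothesis fails on two counts.) Even if one grants your inclusion, the monotonicity bound $[\mathrm{Stab}:V]\le C_\infty$ is only an inequality of cardinalities, not the divisibility $[\mathrm{Stab}:V]\mid C_\infty$ that condition~(iii) actually requires.

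What works is to prove the \emph{equality} $C_n\coloneqq[\mathrm{Stab}:V]=C_\infty$ once $n$ is large; this is the paper's approach. Since $gUg^{-1}$ is open, the principal congruence subgroup $K_H(p^n)$ lies inside it for $n\gg 0$. One then checks the factorisation $K_{H,1}(p^n)=K_{H,1}(p^\infty)\cdot K_H(p^n)$: given $h=(h_1,h_2)$ with bottom rows $\equiv(0,1)\bmod p^n$, set $u_i=\left(\begin{smallmatrix}\det h & b_i\\ 0 & 1\end{smallmatrix}\right)$ and verify $u^{-1}h\equiv 1\bmod p^n$. Hence every class in $K_{H,1}(p^n)/(K_{H,1}(p^n)\cap gUg^{-1})$ has a representative in $K_{H,1}(p^\infty)$, so the always-injective map from the $C_\infty$ coset space to the $C_n$ coset space is a bijection. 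With $C_n=C_\infty$ in hand, $C_\infty\cdot(g,\Phi[n],\Phi[n])$ is primitive integral on the nose.
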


  \begin{proof}
   If we let $C_n = [ K_{H, 1}(p^n) : K_{H, 1}(p^n) \cap g U g^{-1}]$, then clearly $C_n \cdot (g, \Phi[n], \Phi[n])$ is an integral test datum for all $n \ge 1$, and for $n \gg 0$ we have $C_n = C_\infty$.
  \end{proof}

  \begin{proposition}
   Let $U = \{ g \in K_G: \nu(g) = 1 \bmod p\}$. Then there exists an integral test datum of level $U$ such that $\fz(\delta) = p P_\pi(p^{-1-q})$.
  \end{proposition}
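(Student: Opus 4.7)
The plan is to package the $\GSp_4$ tame norm relation of \cref{sect:gsp4tame} as an integral test datum. I would take
\[ \delta = (p-1) \cdot \big[ (1, \Phi[n], \Phi[n]) - (t_{1,0}^{-1}\eta t_{1,0}, \Phi[n], \Phi[n]) \big] \]
for any fixed $n \ge 2$, interpreted as a formal $\ZZ$-linear combination of two primitive data. Both group elements have similitude factor $1$, so they give well-defined cosets of $U$; since $t_{1,0}^{-1}\eta t_{1,0}$ is not in $K_G$ (its $(1,3)$ and $(2,4)$ entries are $\varpi^{-1}$), the two cosets are distinct and the formal difference is non-trivial.

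For integrality, each primitive piece becomes primitive-integral once scaled by the corresponding index $C_\infty = [K_H : K_H \cap gUg^{-1}]$, by the preceding proposition. For $g = 1$ one has $C_\infty = [K_H : K_H \cap U] = p-1$ (since the only obstruction is the $\nu \equiv 1 \bmod p$ condition, cutting out a subgroup of index $p-1$ via the common-determinant map). For $g = t_{1,0}^{-1}\eta t_{1,0}$ the index $C_\infty$ is a priori larger, due to the conjugation by $g$; I expect it equals $p-1$ times a power of $p$, which one absorbs either by increasing $n$ (trace-compatibility of $\Phi[n]$) or by an additional integer rescaling of the Schwartz functions, so that both primitive pieces appear with the uniform prefactor $p-1$.

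The zeta value is then computed using the dictionary $W^{\Phi[n]}_i = \tfrac{p+1}{p} W'_{\fa_i}[n]$ from \S 5.2 together with the tame norm relation of \cref{sect:gsp4tame}:
\begin{align*}
 \fz(\delta)
 &= (p-1)\left(\tfrac{p+1}{p}\right)^2 \fz\!\big((1 - t_{1,0}^{-1}\eta t_{1,0}) W_0^{\sph},\, W'_{\fa_1}[n],\, W'_{\fa_2}[n]\big) \\
 &= (p-1)\cdot \tfrac{(p+1)^2}{p^2} \cdot \tfrac{p^3}{(p-1)(p+1)^2} \, P_\pi(\fb_1\fb_2/p^3)
  = p \cdot P_\pi(\fb_1\fb_2/p^3).
\end{align*}
The clean cancellation (the factor $(p-1)$ out front kills the $(p-1)$ in the denominator of the tame relation, while the $(p+1)^2$ factors match) is the whole point of scaling $\delta$ by $p-1$. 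Finally, the relabelling table combined with the identity $\chi_1\chi_2 = \chi_\pi$ yields $\fb_1\fb_2/p^3 = p^{-1-q}$ (in the new normalization of the Hecke parameters), so $\fz(\delta) = p\, P_\pi(p^{-1-q})$.

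The main obstacle is the integrality bookkeeping for the second primitive piece: one needs to compute, or at least estimate, the index $[K_H : K_H \cap (t_{1,0}^{-1}\eta t_{1,0})\, U\, (t_{1,0}^{-1}\eta t_{1,0})^{-1}]$ and check that the uniform scaling $p-1$ is compatible with the integrality requirement on $(\Phi[n])^{\otimes 2}$, exploiting the flexibility in choosing $n$.
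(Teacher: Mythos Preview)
Your approach matches the paper's, and the computation of $\fz(\delta)$ is correct. The one genuine gap is the point you yourself flag: the index $C_\infty$ for $g = t_{1,0}^{-1}\eta t_{1,0}$. Your proposed workarounds would not save the argument if this index were actually larger than $p-1$. Trace-compatibility of the $\Phi[n]$ means that $\fz(g W_0^{\sph}, W^{\Phi[n]}, W^{\Phi[n]})$ is independent of $n$ once $n$ is large, so increasing $n$ changes nothing; and rescaling the Schwartz functions in only the second term would multiply that term's contribution to $\fz(\delta)$ by the same extra factor, destroying the clean cancellation that yields $p\,P_\pi(p^{-1-q})$.

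In fact $C_\infty = p-1$ for the second term as well; the paper simply asserts this. It is a short explicit check. For $h \in K_{H,1}(p^\infty)$ with components $\bigl(\stbt{a}{b_1}{0}{1},\stbt{a}{b_2}{0}{1}\bigr)$, first $t_{1,0}\,\iota(h)\,t_{1,0}^{-1}$ replaces $b_1,b_2$ by $\varpi b_1,\varpi b_2$; then conjugating by $\eta^{-1}$ on the left and $\eta$ on the right produces an upper-triangular matrix whose upper-right $2\times 2$ block has entries $a-\varpi b_2-1$, $\varpi b_1 + \varpi b_2 - (a-1)$, $\varpi b_2$, $a-\varpi b_2-1$, each congruent to $0$ or $\pm(a-1)$ modulo $\varpi$. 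The final conjugation by $t_{1,0}^{-1}$ divides this block by $\varpi$, so $g^{-1}\iota(h)g \in K_G$ if and only if $a \equiv 1 \bmod p$. Since this is also exactly the condition $\nu(h)\equiv 1 \bmod p$ defining $U$, the index is $p-1$, just as for the identity coset.
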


  \begin{proof}
   This is essentially the content of \cite[\S 3]{LSZ17} (see \S\ref{sect:gsp4tame} above for a summary). We consider the test datum
   \[ \delta_n = \left(\mathrm{id}, \Phi[n], \Phi[n]\right) - \left( t_{1, 0}^{-1} \eta t_{1, 0}, \Phi[n], \Phi[n]\right).\]
   One computes that the index $C_\infty$ is $p-1$ for both terms. Thus $(p-1) \delta_n$ is integral for $n \gg 0$ (in fact $n \ge 2$ suffices).

   Moreover, cancelling out $(p/(p+1))^2$ arising from the comparison between the $\Phi[n]$ and $W_{\fa_i}'[n]$, the result of \cref{sect:gsp4tame} is that for $n \ge 2$ we have
   \[ \fz(\delta_n) = \frac{p}{p-1} P_\pi(p^{-1-q}).\]
   Combining these statements gives the result.
  \end{proof}

 \subsection{Klingen-type formulae}

  For the Euler factor appearing in \cite[\S 6.9]{LZ20}, we use instead a Schwartz function $\Phi_{\mathrm{crit}}$, which maps to the normalised Whittaker function $W_{\fa_i}$ when we label the roots of the Hecke polynomial of $\sigma_i$ in the opposite order (i.e.~so that $\fa_i = p$ and $\fb_i = p^{-t_i} \chi_i(p)^{-1}$). Making these substitutions in \cref{prop:Klingen-eigen}, we obtain
  \begin{multline*}
  \fz\left(W_{\alpha\beta}^{\prime,\Kl}, \Phi_{\mathrm{crit}}, \Phi_{\mathrm{crit}}\right) = \\
  \tfrac{p^3}{(p+1)^2(p-1)}\left.
       \middle( 1 - \tfrac{p^q}{\alpha}\middle)
       \middle( 1 - \tfrac{p^q}{\beta}\middle)
       \middle( 1 - \tfrac{p^{1+r_2 + r}\chi_2(p)}{\alpha}\middle)
       \middle( 1 - \tfrac{p^{1+r_2 + r}\chi_2(p)}{\beta}\middle)
       \middle( 1 - \tfrac{\gamma}{p^{2+r_2 + r} \chi_2(p)}\middle)
       \middle( 1 - \tfrac{\delta}{p^{2+r_2 + r} \chi_2(p)}\middle)
       \right.,
  \end{multline*}
  as required (although the normalisation factor $\tfrac{p^3}{(p+1)^2(p-1)}$ was omitted in \emph{op.cit.}, which is harmless, since it is a non-zero scalar; this factor is related to the index of the subgroup $K_{H, \Delta}(p)$ appearing in \cite{LPSZ1}).

\section{Application to \texorpdfstring{$\GSp_4 \times \GL_2$}{GSp(4) x GL(2)}}

 The table of substitutions for $\GSp_4 \times \GL_2$ looks as follows:
 \[
 \begin{array}{c|c}
 \alpha,\dots,\delta & p^{-(r_1 + r_2)/2} \alpha, \dots, p^{-(r_1 + r_2)/2}\delta\\
 \fa_2, \fb_2 & p^{-t_2/2} \fa_2, p^{-t_2/2} \fb_2\\
 \fa_1, \fb_1 & p^{-\tfrac{t_1}{2}} \chi_{\pi}\chi_{\sigma_2}(p)^{-1}, p^{1+\tfrac{t_1}{2}} \\
 \end{array}
 \]
 where now $t_2 + 2 = \ell$ is the weight of the $\GL_2$ cusp form, and varying $t_1$ gives us the cyclotomic twist. With these parameters, we land in the cohomology group corresponding to the $L$-value
 \[
  L(V_\Pi \otimes V_\Sigma, 2 + q), \qquad q = \tfrac{r_1 + r_2 - t_1 + t_2}{2},
 \]
 (note signs!), where $V_\Pi$, $V_\Sigma$ each have smallest Hodge number $0$, so the centre of the $L$-function is at $s = \frac{r_1 + r_2 + t_2 + 5}{2}$. In the analytic normalisation this is $L(\pi \times \sigma, \tfrac{-t_1}{2})$.

 \begin{remark}
  For technical reasons, in the results of \cite{LZ21-erl} we had to specialise to the case when $t_1 + t_2 = r_1 - r_2 - 2$, so that $q = r_2 + 1 + t_2$. However, this is not assumed in \cite{HJS20,LZ20b-regulator}, and is not needed for the formal zeta-integral computations we make here.
 \end{remark}

 \subsection{P-stabilisation of \'etale classes}

  To relate the \'etale classes considered in \cite{HJS20} to their analogues at prime-to-$p$ level, the relevant Euler factor is
  \[
   \fz\left( \eta t_{1, 1} \cdot (U_1 U_2)^{-1} W_{\alpha\beta}^{\Iw}, \Phi[\infty], J t_1 \cdot U^{-1} W_{\fa_2}\right).
  \]
  This is given by \cref{eq:iwahori3} with the parameters relabelled as above (taking $\fa_1$ to be the root of smaller valuation), so we obtain
  \[
   \tfrac{1}{p^5(p+1)(p - 1)^2}\prod \left\{ \left(1 - \tfrac{\lambda}{p^{2 + q}}\right) : \lambda = \beta \fb_2,\gamma \fa_2, \gamma\fb_2, \delta \fa_2,  \delta\fb_2 \right\} \cdot \prod \left\{ \left(1 - \tfrac{p^{1 + q}}{\lambda}\right) : \lambda = \alpha \fa_2, \alpha \fb_2, \beta \fa_2\right\}.
  \]
  One can check that this is consistent with the general formalism of Panchishkin subrepresentations: if the weight of $\pi \times \sigma$ lies in the region where the motivic constructions of \cite{HJS20} apply, and $\pi$ and $\sigma_2$ are ordinary at $p$, then $\{\alpha \fa_2, \alpha \fb_2, \beta \fa_2\}$ are exactly the three Frobenius eigenvalues having valuation $\le 1 + q$, while the other five have valuation $\ge 2 + q$.

\section{Cusp forms for \texorpdfstring{$\GSp_4 \times \GL_2 \times \GL_2$}{GSp(4) x GL(2) x GL(2)}}

 We finally mention applications to self-dual cuspidal automorphic representations of $\GSp_4 \times \GL_2 \times \GL_2$ and diagonal cycles, the setting explored in \cite{LZvista}. Again, we must shift notation slightly:
 \[
 \begin{array}{c|c}
 \alpha,\dots,\delta & p^{-(r_1 + r_2)/2} \alpha, \dots, p^{-(r_1 + r_2)/2}\delta\\
 \fa_1, \fb_1 & p^{-t_2/2} \fa_1, p^{-t_2/2} \fb_1\\
 \fa_2, \fb_2 & p^{-t_2/2} \fa_2, p^{-t_2/2} \fb_2\\
 \end{array}
 \]
 where $\sigma_i$ correspond to holomorphic cusp forms of weight $t_i + 2$ (with $t_i \ge -1$), and the $L$-value we want to study is the central value
 \[ L(\Pi \times \Sigma_1 \times \Sigma_2, \tfrac{1}{2}) = L(V_\Pi \otimes V_{\Sigma_1}\otimes V_{\Sigma_2}, 1 + h), \quad h \coloneqq \tfrac{r_1 + r_2 + t_1 + t_2 + 4}{2}. \]

 Making these substitutions in \cref{prop:Klingen-eigen}, we obtain Proposition 3.9.2 of \cite{LZ20b-regulator}, again up to an unimportant volume factor (to be corrected in a future iteration). On the other hand, making the same substitutions in \cref{eq:iwahori1} gives the Euler factor appearing in Theorem 8.2.5 of \cite{LZvista}.

\newcommand{\noopsort}[1]{}
% \bibliographystyle{../../amsalphaurl}
% \bibliography{../../references.bib}
\providecommand{\bysame}{\leavevmode\hbox to3em{\hrulefill}\thinspace}
\providecommand{\MR}[1]{}
\renewcommand{\MR}[1]{%
 MR \href{http://www.ams.org/mathscinet-getitem?mr=#1}{#1}.
}
\providecommand{\href}[2]{#2}
\newcommand{\articlehref}[2]{\href{#1}{#2}}

\end{document}